\def\R {\mathbb{R}}
\renewcommand{\phi}{\varphi}
\renewcommand{\epsilon}{\varepsilon}
\renewcommand{\leq}{\leqslant}
\renewcommand{\le}{\leqslant}
\renewcommand{\geq}{\geqslant}
\renewcommand{\div}{\mathrm{div}}
\newcommand{\Per}{\mathrm{Per}}
\newtheorem{proposition}{Proposition}[section]
\newtheorem{theorem}[proposition]{Theorem}
\newtheorem{lemma}[proposition]{Lemma}
\theoremstyle{definition}
\newtheorem{definition}[proposition]{Definition}
\newtheorem{remark}[proposition]{Remark}
\numberwithin{equation}{section}
\title[Graphical translators]{Graphical translators for anisotropic and crystalline mean curvature flow} 
\subjclass{ 53C44, 35R11, 49Q20.}
\keywords{Anisotropic mean curvature flow, translating solitons, symmetric solutions.}
\email{annalisa.cesaroni@unipd.it}
\email{heiko.kroener@uni-due.de}
\email{matteo.novaga@unipi.it}
\thanks{The authors were supported by the INDAM-GNAMPA and by the PRIN Project 2019/24 {\it Variational methods for stationary and evolution problems with singularities and interfaces}.}
\begin{document}
\maketitle

\centerline{\scshape Annalisa Cesaroni }
\medskip
{\footnotesize
\centerline{Department of Statistical Sciences, University of Padova}
\centerline{Via Cesare Battisti 141, 35121 Padova, Italy}
} 

\medskip

\centerline{\scshape Heiko Kr\"oner}
\medskip
{\footnotesize
\centerline{Universit\"at Duisburg-Essen, Fakult\"at f\"ur Mathematik}
\centerline{Thea-Leymann-Stra\ss e 9, 45127, Essen, Germany}
}
 
\medskip

\centerline{\scshape Matteo Novaga}
\medskip
{\footnotesize
\centerline{Department of Mathematics, University of Pisa}
\centerline{Largo Bruno Pontecorvo 5, 56127 Pisa, Italy}
}

\begin{abstract} 
In this paper we discuss existence, uniqueness and some properties of a class of solitons to the anisotropic mean curvature flow, i.e., graphical translators, either in the plane or under an assumption of cylindrical symmetry on the anisotropy and  the mobility. In these cases, the equation becomes an ordinary differential equation, and this allows to find explicitly the translators and describe their main features.   
\end{abstract} 

\tableofcontents

\section{Introduction} 
 
We consider the evolution of sets $t\mapsto E_t$ in $\R^{N+1}$ governed by the geometric law 
 \begin{equation}\label{mcf} \partial_t p\cdot \nu(p)= -\psi(\nu(p))H_\phi(p, E_t),\end{equation} 
 where $\nu(p)$ is the  exterior normal  at
 $p\in \partial E_t$, $\psi$ is  a norm representing the mobility,  
 $\phi$ is a norm  representing the surface tension, and $H_\phi(p)$  is the anisotropic mean curvature of 
 $\partial E_t$ at $p$, see Definition \ref{defanisotropy}. This evolution is the gradient flow for the anisotropic perimeter
 $\int_{\partial E} \phi(\nu)d{\mathcal H}^N(y)$ with respect to a weigthed $L^2$ norm (depending on  $\psi$) and it is an analogue of the classical (isotropic) mean curvature flow, 
 which corresponds to the case $\phi(x)=\psi(x)=|x|$.      
 
 In this paper we are interested in a particular class of solutions to \eqref{mcf}, which are the graphical translators. 
   \begin{definition}\label{deftrans}  
 An entire graphical translator  is a solution to \eqref{mcf}  given by $\partial E_t=\partial E_0+ct \ e_{N+1}$, where $c\in\R$ and $\partial E_0$ is the  graph of a function $u:\R^N\to \R$. In particular $E_0$ solves
\begin{equation}\label{trans0}c e_{N+1}\cdot \nu(p)= -\psi(\nu(p))H_\phi(p, E_0). \end{equation} 

A complete graphical  translator  is a solution to \eqref{mcf}  given by $\partial E_t=\partial E_0+ct \ e_{N+1}$, where $c\in\R$ and $\partial E_0$ is a complete hypersurface which solves \eqref{trans0} and  is the  graph of a function $u:\Omega\subseteq \R^N\to \R$, where $\Omega\subset\R^N$ is an open set.
 \end{definition}
 
 In the isotropic case $\phi(p)=\psi(p)=|p|$ translating solutions have been intensively studied, and there is a huge literature on the topic, since they arise as blow-up limits of type-II singularities of the mean curvature flow. 
In every dimension there exist complete translating graphs, and the first example is the so called grim reaper in $\R^2$. On the other hand
entire graphical  translators  in $\R^{N+1}$ exist  only for  $N>1$ (see \cites{hof,w}),  and one of the main examples is the bowl soliton, which is the unique (up to translations) convex and  radially symmetric   solution (see \cite{clutt}). 
Finally we recall that in \cite{sx} Spruck and Xiao showed that every graphical translator in $\R^3$ is convex and in \cite{w} Wang proved that  the bowl soliton is the only one, whereas in $\R^{N+1}$ for $N>2$ there are entire graphical translators which are  convex but not radially symmetric.  In \cite{hof} all complete translating  graphs in $\R^3$ have been classified.  
 
 In this paper we initiate the analysis of translating graphs for the anisotropic mean curvature flow, and in particular we are interested in the description of grim reapers  and bowl solitons.  Section  \ref{grim} is devoted to grim reapers in $\R^2$ and in higher dimension, whereas in Section \ref{bowl}  we assume that  both the anisotropy and the mobility have a cylindrical symmetry,  and we study existence and qualitative properties of bowl solitons. 
 
We construct our solutions assuming first that  the anisotropy is regular, see assumption \eqref{phiregular}, and then we treat the general case by approximation, using the stability result obtained in \cite{cmnp2}. We also   discuss directly the construction and main properties of these  soliton solutions,  in the case of purely crystalline anisotropy, see Remarks \ref{remc1} and \ref{remc2}. 

Finally we recall that other soliton solutions for the anisotropic mean curvature flow  in the graphical setting, namely the expanding self-similar solutions, have been studied by the authors in \cite{ckn} (see also \cite{ggh} for a result in the case of crystalline curvature flow in the plane). 

\section{Definitions and preliminary results} 
We recall some definitions for anisotropies and related geometric flows (see for instance  \cite{bccn}).

\begin{definition}\label{defanisotropy}  
Let  $\phi:\R^{N+1}\to [0, +\infty)$ be  a positively $1$-homogeneous convex map,   such that $\phi(p)>0$ for all $p\neq 0$. We associate to the surface tension 
the anisotropy $\phi^0:\R^{N+1}\to [0+\infty)$ defined as $\phi^0(q):=\sup_{\phi(p)\leq 1} p\cdot q$, which is again convex and positively $1$-homogeneous. 
The anisotropic mean curvature of a set $E$ at a point $p\in \partial E$ is defined as
\[H_\phi(p, E)=\div_\tau(\nabla \phi(\nu(p))),  \]  when $\phi$ is regular,  where $\nu(p)$ is the exterior normal vector to $\partial E$ at $p$, and $\div_\tau$ is the tangential divergence, 
whereas  in the general case  it is defined using the subdifferential, 
\[ H_\phi(p,E)\in\div_\tau(\partial \phi(\nu(p))).\] 
\end{definition} 
 We define the  Wulff shape as  the convex compact set  \[W_{\phi^0} :=\{q\in\R^{N +1}\ | \phi^0(q) \leq 1\}.\] 
By using this definition, with some computation it is possible to check that (see \cite{bp}) 
\begin{equation}\label{wcu} H_\phi(RW_{\phi^0})=\frac{N}{R}.\end{equation}  
We consider the geometric evolution law  \eqref{mcf} under the following assumptions on  mobility:
 \begin{equation}\label{asspsi}
\psi:\R^{N+1}\to [0, +\infty) \text{ is positively $1$-homogeneous, convex and }    \psi(p)>0  \qquad \text{for all } p\neq 0.
 \end{equation}  
 
Some results will be  first obtained assuming the following regularity assumption on the anisotropy:
  \begin{equation}\label{phiregular}    \text{ $\phi\in C^{2}(\R^{N+1}\setminus \{0\})$ and $\phi^2$ is uniformly convex,} \end{equation} 
and then extended by approximation, since the level set solutions we  consider are stable with respect to locally uniform convergence (see Theorem \ref{ex} and \cite{cmnp2})

 \begin{remark}\upshape\label{norma} 
 We collect some useful properties of $\phi$, that will be useful in the following.
 First of all  for all $p, p_0\in \R^{N+1}$, by convexity we get  \[\phi(p)\geq \phi(p_0)+\partial \phi(p_0)\cdot(p-p_0)\] where $\partial \phi(p_0)$ is the subdifferential. 
  Moreover by  convexity  and  positive $1$-homogeneity,  for every $p_0\in \R^{N+1}$ we get
  \[\phi(p_0)=\partial \phi(p_0)\cdot p_0.\]
  Finally by positive $1$-homogeneity of $\phi$, for every $\lambda\in \R$, $\lambda\neq 0$, we have that 
  \[\partial \phi(\lambda p_0)=\frac{\lambda}{|\lambda|}\partial \phi (p_0). \]
 \end{remark}
 We recall the following result about well posedness of the flow \eqref{mcf}, in the level set sense. In particular we introduce a uniformly continuous function  $U_0:\R^{N+1}\to\R$ such that $E_0=\{p\in \R^{N+1}:\ U_0(p)\leq 0 \}$  and $\partial E_0=\{p\in \R^{N+1}:\ U_0(p)= 0 \}$,   and we consider    the following quasi-linear parabolic equation 
 \begin{equation}\label{pdelevel}
 \begin{cases} U_t-\psi(\nabla U)\div(\nabla\phi(\nabla U))=0\\U(p,0)=U_0(p).
 \end{cases} 
  \end{equation}

 Existence and uniqueness of the level set flow associated to \eqref{mcf} have been obtained  for general  mobilities $\psi$ and purely crystalline norms $\phi$ in \cites{gp1,gp2}, 
  in the viscosity setting, whereas the case of general norms $\phi$ with convex  mobilities $\psi$  has been treated in \cites{cmnp1, cmnp2},
  in the distributional setting. 

 \begin{theorem}\label{ex}  
There exists a unique  continuous solution $U$ to \eqref{pdelevel}. The solution is intended  in the distributional sense as in \cite{cmnp1}, 
and coincides with the locally uniform limit of viscosity solutions to \eqref{pdelevel} when $\phi$ is locally uniformly approximated by $\phi_n$ which satisfy \eqref{phiregular}, see \cite{cmnp2}.    Therefore the  level set flows defined as 
 \begin{eqnarray*} E^+_t&:=&\{p\in \R^{N+1}:\ U(p,t)\leq 0\}\\E^-_t&:=&\{p\in \R^{N+1}:\ U(p,t)< 0\} \end{eqnarray*}
 provide a solution (in the distributional level set sense   \cite{cmnp2}) to \eqref{mcf}.
 
 Moreover, if $U_0,V_0$ are two uniformly continuous functions such that $U_0\leq V_0$, then $U(p,t)\leq V(p,t)$ for all $t>0$ and $p\in \R^{N+1}$. 
 
  Moreover if $E_0$ is the subgraph of an entire Lipschitz function, that is 
 \begin{equation}\label{initial} \exists u_0:\R^N\to \R, \text{ Lipschitz continuous such that }E_0=\{(x,z)\in \R^{N+1}\ |\ z\leq u_0(x)\},
 \end{equation} 
  the level set flow satisfies $\overline E_t^-= E_t^+=\{(x,z)\in \R^{N+1}\ |\ z\leq u(x,t)\}$, where $u(x,t)$
 is a continuous function such that  
 \[|u(x,t)-u(y,s)|\leq \|\nabla u_0\|_\infty |x-y|+K\sqrt{|t-s|}\] for some  $K>0$ depending only on the Lipschitz constant $\|\nabla u_0\|_\infty$ of $u_0$. 
 
When $\phi$ is regular, that is, \eqref{phiregular} holds, then $u$ is the viscosity solution to
  \begin{equation}\label{pdelevel1}
 \begin{cases} u_t+\psi(-\nabla u,1)\div(\nabla_x\phi(-\nabla u,1))=0\\u(x,0)=u_0(x).
 \end{cases} 
  \end{equation}

 \end{theorem}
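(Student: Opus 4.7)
The plan is to proceed in four stages, starting with the regular case and passing to the limit. First, when $\phi$ satisfies \eqref{phiregular}, equation \eqref{pdelevel} is a degenerate quasi-linear parabolic PDE of geometric type with a continuous spatially homogeneous Hamiltonian, so standard viscosity solution theory in the spirit of Chen--Giga--Goto and Evans--Spruck applies: for each uniformly continuous $U_0$ there is a unique uniformly continuous viscosity solution $U$, together with the comparison principle $U_0\leq V_0 \Rightarrow U\leq V$, and the level sets of $U$ are independent of the chosen representative $U_0$, yielding the level set flow $(E_t^{\pm})$.

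For general, possibly crystalline, $\phi$ one approximates by $\phi_n$ satisfying \eqref{phiregular} with $\phi_n\to\phi$ locally uniformly. A uniform modulus of continuity for the corresponding viscosity solutions $U_n$ is obtained by comparing with space--time translations of $U_0$ and with Wulff barriers; by Ascoli there exists a locally uniform limit $U$, and the stability result of \cite{cmnp2} identifies $U$ as the unique distributional solution in the sense of \cite{cmnp1}. Comparison passes to the limit immediately. This yields the first two paragraphs of the statement.

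For the graphical case, the key observation is that \eqref{pdelevel} is invariant under ambient translations, so for any $(h,s)\in\R^N\times\R$, $\tau_{(h,s)}E_0\supset E_0$ implies $\tau_{(h,s)}E_t^+\supset E_t^+$ by comparison. Since $u_0$ is $L$-Lipschitz one has $E_0\subset\tau_{(h,L|h|)}E_0$ for every $h\in\R^N$, and applying this with $h=0,\,s>0$ also rules out fattening so that $\overline{E_t^-}=E_t^+$ is the subgraph of a function $u(\cdot,t)$ which is then $L$-Lipschitz in $x$. The Hölder estimate in time is obtained by constructing upper and lower barriers of the form ``translated Wulff shapes of radius $\sim \sqrt{t}$'' placed above and below the graph; their explicit evolution from \eqref{wcu}, combined with the uniform spatial Lipschitz bound, yields $|u(x,t)-u(x,s)|\leq K\sqrt{|t-s|}$ with $K$ depending only on $\|\nabla u_0\|_\infty$, $\phi$ and $\psi$. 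Combining with the spatial estimate gives the claimed modulus.

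Finally, when \eqref{phiregular} holds, the reduction of \eqref{pdelevel} to \eqref{pdelevel1} is a direct substitution: pick the level set representative $U(x,z,t)=z-u(x,t)$, so that $\nabla U=(-\nabla u,1)$ and $U_t=-u_t$, and invoke positive $1$-homogeneity of $\phi$ and $\psi$; the viscosity solution property of $u$ follows from that of $U$ because the vertical component of $\nabla U$ is always $1\neq 0$, so no degenerate test can touch $u$. The main obstacle in this program is the graphical step for crystalline $\phi$: proving that the $x$-Lipschitz and $t$-Hölder estimates, as well as the absence of fattening, survive the approximation by $\phi_n$ requires that the barrier constants depend only on $\phi$ via quantities that are continuous under locally uniform convergence of norms. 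This is precisely where the stability framework of \cite{cmnp2} is essential, and it is the only step of the proof that is genuinely nontrivial beyond the classical smooth theory.
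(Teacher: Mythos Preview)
The paper does not supply a proof of this theorem: it is stated as a known result, with the core content (existence, uniqueness, comparison, stability under approximation of the anisotropy) attributed directly in the statement to \cite{cmnp1} and \cite{cmnp2}, and the paragraph immediately preceding the theorem also points to \cite{gp1,gp2} for the purely crystalline setting. There is therefore no proof in the paper to compare your attempt against.

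That said, your four-stage outline --- classical viscosity theory \`a la Chen--Giga--Goto / Evans--Spruck in the regular case, approximation by regular $\phi_n$ combined with the stability result of \cite{cmnp2} for general $\phi$, translation-invariance plus comparison for the spatial Lipschitz bound and shrinking Wulff barriers (using \eqref{wcu}) for the $\sqrt{t}$ time modulus in the graphical case, and the substitution $U(x,z,t)=z-u(x,t)$ to pass from \eqref{pdelevel} to \eqref{pdelevel1} --- is an accurate high-level account of how the cited references actually establish these facts. One small imprecision: your sentence ``applying this with $h=0,\,s>0$ also rules out fattening'' is too telegraphic; the no-fattening argument for Lipschitz subgraphs uses that for every $s>0$ the vertical translate $E_0+se_{N+1}$ strictly contains $E_0$, so by comparison $E_t^-+se_{N+1}\supset E_t^+$ for all $s>0$, which forces $\overline{E_t^-}=E_t^+$. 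With that clarification your sketch is sound.
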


Note that the Definition \ref{deftrans} on $E_0$ to be a graphical translator reads as a condition on the function $u$, whose graph is $\partial E_0$: in particular 
$u(x)+ct$ has to solve, in appropriate sense, \eqref{pdelevel1}, which means that $u$ has to solve 
\begin{equation}\label{trans} -\div(\nabla_x\phi(-\nabla u,1))=\frac{c}{\psi(-\nabla u,1)}.\end{equation}  

In order to construct translating solutions,  it is sufficient to solve Equation \eqref{trans}. Note  that if we can solve the equation for $c=1$, then up to a suitable dilations we  obtain a solution  to \eqref{trans} for every $c\neq 0$.

 First of all, we observe  that in the case of regular anisotropies there are no globally  Lipschitz  translating solutions  (with $c\neq 0$). 

\begin{proposition}  Assume that \eqref{phiregular} holds. Let $E_0\subseteq \R^{N+1}$  be the subgraph of a    Lipschitz continuous function $u_0$, and assume  that  there exists $c$, for which $E_t=E_0+ct \ e_{N+1}$ is a solution to \eqref{mcf}. Then $c=0$ and  $E_0$ is a half-space. 
\end{proposition}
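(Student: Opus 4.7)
The plan is to split the proof into two steps: first rule out $c\neq 0$ by an integration-by-parts argument on expanding balls, then invoke a Bernstein/Liouville-type result to see that the remaining anisotropic minimal-graph equation forces $u_0$ to be affine. By Theorem~\ref{ex}, the translating ansatz $u(x,t)=u_0(x)+ct$ is the viscosity solution of \eqref{pdelevel1}, so $u_0$ solves \eqref{trans}; since $\phi$ is regular and $u_0$ is Lipschitz with, say, $|\nabla u_0|\leq L$, the equation is uniformly elliptic and standard quasilinear regularity yields $u_0\in C^\infty(\R^N)$.

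For the first step, set $F(x):=\nabla_x\phi(-\nabla u_0(x),1)$. Because $\nabla\phi$ is $0$-homogeneous and $1\leq|(-\nabla u_0,1)|\leq\sqrt{1+L^2}$, the field $F$ is uniformly bounded; similarly $\psi(-\nabla u_0,1)$ is bounded above by some constant $M'$. Integrating \eqref{trans} over the ball $B_R\subset\R^N$ and applying the divergence theorem gives
\[
\frac{|c|}{M'}\,\omega_N R^N \;\leq\; \left|\int_{B_R}\div F\,dx\right| \;=\; \left|\int_{\partial B_R}F\cdot\nu\,d\sigma\right| \;\leq\; \|F\|_\infty\,N\omega_N R^{N-1}.
\]
Letting $R\to\infty$ forces $c=0$.

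With $c=0$, we are reduced to the anisotropic minimal-graph equation $\div F=0$. Differentiating in $x_i$, the bounded function $w:=\partial_i u_0$ satisfies the linear divergence-form equation
\[
\partial_j\!\left(a_{jk}(x)\,\partial_k w\right)=0, \qquad a_{jk}(x):=\phi_{p_j p_k}(-\nabla u_0(x),1), \quad j,k=1,\dots,N.
\]
Uniform convexity of $\phi^2$ implies that $D^2\phi(p)$ is positive definite on the hyperplane $p^\perp$ for every $p\neq 0$; combined with the fact that the last component of $(-\nabla u_0,1)$ equals $1$ together with $|\nabla u_0|\leq L$, this ensures that the leading $N\times N$ minor $(a_{jk})$ is uniformly elliptic on all of $\R^N$. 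Since $|w|\leq L$, the De~Giorgi--Nash--Moser decay-of-oscillation estimate
\[
\osc_{B_r}w \;\leq\; C\left(\frac{r}{R}\right)^{\alpha}\osc_{B_R}w \;\leq\; 2CL\left(\frac{r}{R}\right)^{\alpha}
\]
applied with $r$ fixed and $R\to\infty$ shows that $w$ is constant. Hence every partial derivative of $u_0$ is constant, so $u_0$ is affine and $E_0$ is a half-space.

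The main technical point to verify carefully is the uniform ellipticity of $(a_{jk})$: uniform convexity of $\phi^2$ (as opposed to $\phi$, which is only $1$-homogeneous and therefore degenerate in the radial direction) must be converted into a uniform lower bound on the eigenvalues of the $N\times N$ leading minor of $D^2\phi$ evaluated at $(-\nabla u_0,1)$. The Lipschitz bound on $u_0$ keeps the unit direction $(-\nabla u_0,1)/\sqrt{1+|\nabla u_0|^2}$ bounded away from the horizontal subspace $\R^N\times\{0\}$, which is precisely what is needed to exclude the kernel direction of $D^2\phi$.
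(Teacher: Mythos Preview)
Your proof is correct; the second step (linearize, then Liouville via De~Giorgi--Nash--Moser oscillation decay for each $\partial_i u_0$) is essentially what the paper does, citing Moser's Harnack results. The difference lies in how you rule out $c\neq 0$. The paper exploits the parabolic estimate in Theorem~\ref{ex}: since $u(x,t)=u_0(x)+ct$ is the viscosity solution of \eqref{pdelevel1}, the bound $|u(x,t)-u_0(x)|\leq K\sqrt{t}$ gives $|c|t\leq K\sqrt t$ for all $t>0$, forcing $c=0$ in one line. Your route---integrate \eqref{trans} over $B_R$, use the divergence theorem and the $0$-homogeneity of $\nabla\phi$ to bound the flux by $CR^{N-1}$, then compare with the volume term $\gtrsim |c|R^N$---is a genuinely different and more self-contained argument: it avoids the parabolic machinery entirely, at the price of needing $C^2$ regularity of $u_0$ up front (which you correctly obtain from viscosity-solution regularity for the uniformly elliptic equation \eqref{trans} with bounded right-hand side $c/\psi$, before knowing that $c=0$). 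Both approaches are short; the paper's is softer once Theorem~\ref{ex} is in hand, while yours is more elementary and would work in settings where the $\sqrt t$ estimate is not already available.
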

\begin{proof}  First of all we show that necessarily the  speed $c$ is equal to $0$.   Since $ E_0$ is the subgraph of a Lipschitz continuous function $u_0$,  
we can write $E_t$ as the subgraph of a function $u( \cdot, t)$. By assumption we get that $u(x,t)=u_0(x)+ct$ is the solution to \eqref{pdelevel1}.   
On the other hand, by  Theorem \ref{ex}, there holds that $|u(x,t)-u_0(x)|\leq K\sqrt{t}$ for a constant $K$,  which implies $c=0$. 

It follows that  $u_0$   solves, in the viscosity sense,  
 \[ \div(\nabla\phi(-\nabla u_0,1))=0\qquad\text{ for all $x\in \R^N$.   }\]
By elliptic regularity theory for viscosity solutions (see \cite{tru}), this implies   $u_0\in C^{1,\alpha}(\R^N)$ for every $\alpha<1$, and then 
by a bootstrap argument $u_0\in C^\infty(\R^N)$. 

Now, we observe that for every $i$,  $(u_0)_{x_i}$ is a bounded  solution to a uniformly elliptic equation in $\R^N$, so that  by \cite[Theorem 5]{moser}
there exists $\lim_{|x|\to \infty} (u_0)_{x_i}(x)=c_i$, and then, again by \cite[Theorem 4]{moser}, we conclude that $u_0$ is an affine function. 

\end{proof}

\section{Grim reapers}\label{grim}
In  the isotropic case there exists only one possible complete translating graph in $\R^2$, 
up to dilations   and translations,  which is called the grim reaper.   In particular this implies that there are not entire graphical translators. 
We will show that the same result holds  also in the anisotropic setting. 

Since we deal with complete but not entire translating graphs, we will not consider  the evolution of the subgraphs of the function (which is not well defined), but the evolution of the boundary $\partial E_0$, that is the graph of $u$, with normal vector  at every $(x, u(x))$ given by $(-u'(x), 1)$.
  
 We start with some technical lemmas, which hold in the regular case.   
 
 \begin{lemma} \label{lemlem}   
 Assume  \eqref{phiregular}.  Then there exist two constants $0<c_1\le c_2$ such that 
 \[
 \frac{c_1}{\phi(t,1)^3}\le \phi_{xx}(t,1)\le \frac{c_2}{\phi(t,1)^3}
 \qquad \forall t\in\R.
 \] 
 Moreover the function  \[h(t):= \phi(t,1)- \phi(1,0)|t|\]
 satisfies 
 \begin{enumerate}\item $h$ is convex and $\lim_{t\to\pm\infty}\frac{h(t)}{t}=0$;
 \item $h'(t)\leq 0$ for $t>0$,   $h'(t)\geq 0$ for $t<0$ and $0\geq h'_+(0)=\lim_{t\to 0^+}h'(t)\geq -2\phi(1,0)$,  whereas $0\leq h'_-(0)=\lim_{t\to 0^-}h'(t)\leq 2\phi(1,0)$;
 \item $-\phi(0,1)\leq h(t)\leq  h(0)=\phi(0,1)$ for all $t\in \mathbb{R}$. 
 \end{enumerate} 
  \end{lemma}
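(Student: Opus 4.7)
The plan is to establish the pointwise bound on $\phi_{xx}(t,1)$ first, using a polar representation of $\phi$, and then to read off all the properties of $h$ from that bound together with $1$-homogeneity, convexity, and symmetry of $\phi$.

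For the pointwise bound, I would parametrize $(t,1)=(\cos\theta,\sin\theta)/\sin\theta$ with $\theta\in(0,\pi)$, so that $t=\cot\theta$ and $dt/d\theta=-1/\sin^2\theta$. Setting $\alpha(\theta):=\phi(\cos\theta,\sin\theta)$, the $1$-homogeneity of $\phi$ gives $\phi(t,1)=\alpha(\theta)/\sin\theta$, and a direct chain-rule computation then yields
\[
\phi_{x}(t,1)=\cos\theta\,\alpha(\theta)-\sin\theta\,\alpha'(\theta),\qquad \phi_{xx}(t,1)=\sin^3\theta\,(\alpha(\theta)+\alpha''(\theta)).
\]
Since $\phi$ is the support function of $W_{\phi^0}$, the quantity $\alpha+\alpha''$ is precisely the radius of curvature of $\partial W_{\phi^0}$ at the point with outward normal $(\cos\theta,\sin\theta)$. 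Under \eqref{phiregular} the function $\alpha$ is $C^2$ on the compact unit circle, so $\alpha+\alpha''$ is bounded above; moreover, uniform convexity of $\phi^2$ (equivalently, bounded curvature from below of $\partial\{\phi\le 1\}$) translates via polar duality into a positive lower bound for the radius of curvature of $\partial W_{\phi^0}$, i.e., $0<c'\leq \alpha+\alpha''\leq C'$ on $[0,\pi]$. Combining with the compactness estimate $0<m\leq\alpha\leq M$ and the identity $\sin^3\theta=\alpha(\theta)^3/\phi(t,1)^3$, one rewrites
\[
\phi_{xx}(t,1)=\frac{\alpha(\theta)^3(\alpha(\theta)+\alpha''(\theta))}{\phi(t,1)^3},
\]
which yields the claim with $c_1=m^3c'$ and $c_2=M^3C'$.

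For the function $h$: property (1) is immediate, since $h''(t)=\phi_{xx}(t,1)>0$ on $\R\setminus\{0\}$ gives convexity of $h$ on each half-line, and $\phi(t,1)/t=\phi(1,1/t)\to \phi(1,0)$ as $t\to+\infty$ (with the analogous limit at $-\infty$ since $\phi$ is assumed to be a norm and therefore even) gives $h(t)/t\to 0$. For (2), note that $\phi_{x}(\cdot,1)$ is non-decreasing by convexity of $t\mapsto \phi(t,1)$, with $\lim_{t\to\pm\infty}\phi_{x}(t,1)=\pm\phi(1,0)$, so $-\phi(1,0)\leq \phi_{x}(t,1)\leq \phi(1,0)$ on all of $\R$; the sign of $h'(t)=\phi_{x}(t,1)\mp\phi(1,0)$ on the two half-lines and the bounds on $h'_{\pm}(0)$ follow directly. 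For (3), $h(0)=\phi(0,1)$ is obvious, while subadditivity of $\phi$ (automatic for any convex, positively $1$-homogeneous map) gives both $\phi(t,1)\leq \phi(t,0)+\phi(0,1)$ and $\phi(t,0)\leq \phi(t,1)+\phi(0,-1)=\phi(t,1)+\phi(0,1)$, so that $-\phi(0,1)\leq h(t)\leq \phi(0,1)=h(0)$.

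The main technical point is the first step: the polar-coordinate computation itself is routine, but translating the uniform convexity of $\phi^2$ into the two-sided bound $0<c'\leq \alpha+\alpha''\leq C'$ uses the duality between $\phi$ and $\phi^0$ (uniform convexity of one norm corresponds to $C^{1,1}$-regularity of the dual unit ball). Once this is in hand, everything else is an elementary consequence of $1$-homogeneity, convexity, and symmetry of $\phi$.
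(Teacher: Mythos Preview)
Your proof is correct, but the route to the first assertion differs from the paper's. The paper computes the Hessian of $\phi^2/2$ directly and observes that, by $2$-homogeneity, $\det\nabla^2(\phi^2/2)$ is $0$-homogeneous and equals $\phi(t,1)^3\phi_{xx}(t,1)$ when evaluated at $(t,1)$; uniform convexity and $C^2$-regularity of $\phi^2$ then bound this determinant between two positive constants on the unit circle, yielding the estimate immediately. Your polar-coordinate computation is really the same identity in disguise (one checks $\alpha^3(\alpha+\alpha'')=\det\nabla^2(\phi^2/2)$ on $\mathbb S^1$), but you then take the longer detour through support-function geometry and duality to bound $\alpha+\alpha''$, whereas the paper never leaves the side of $\phi$ itself. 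The paper's route is shorter and avoids invoking the duality between uniform convexity and $C^{1,1}$-regularity; yours gives the extra geometric interpretation of the constant as the radius of curvature of $\partial W_{\phi^0}$. For the properties of $h$ your arguments essentially coincide with the paper's for (1) and (2), while for (3) your use of subadditivity ($\phi(t,1)\le\phi(t,0)+\phi(0,1)$ and $\phi(t,0)\le\phi(t,1)+\phi(0,-1)$) is cleaner than the paper's, which instead evaluates the supporting-line inequality at $(\pm1,0)$ to bound $h$ by $\pm\phi_z(1,0)$ and then bounds $\phi_z(1,0)$ separately.
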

  
  \begin{proof}
 From  \eqref{phiregular} it follows that there exist $0<c_1\le c_2$ such that 
 \[
 c_1 \le {\rm det}\, \nabla^2 \left( \frac{\phi^2}{2}\right)(x,z)\le c_2 \qquad \forall (x,z)\in \R^2.
 \]
 Noting that $\phi(x,z)=|z|\phi(x/z,1)$ for all $(x,z)$ with $z\ne 0$, a direct computation shows that
 \[
 {\rm det}\, \nabla^2 \left( \frac{\phi^2}{2}\right)(x,z) = \phi\left(\frac xz,1\right)^3 \phi_{xx}\left(\frac xz,1\right),
 \] 
 which implies the first assertion. 
 
 Now, observe that by positive $1$-homogeneity $\phi(-1,0)=\phi(1,0)$, so we get  for $t\neq 0$, 
 \[\frac{h(t)}{t}=\frac{|t|}{t} \phi\left(\frac{t}{|t|}, \frac{1}{|t|}
 \right)-\frac{|t|}{t}\phi(1,0)\to 0 \qquad \text{as $t\to\pm\infty$.}\]
 Now for $t>0$, $h'(t)=\phi_x(t,1)-\phi(1,0)$. First of all we observe that by Remark \ref{norma}, $\phi_x(1,0)=\phi(1,0)$. 
Moreover    by the first part of the proof,  we have that $\phi_x(t,1)$ is a monotone increasing function, and moreover recalling Remark \ref{norma},  
 for $t\neq 0$, \[\phi_x(t,1)=\frac{t}{|t|} \phi_x\left(1, \frac{1}{t}\right)\to \begin{cases} \phi_x(1,0)& \text{ as $t\to +\infty$}\\ -\phi_x(1,0)& \text{ as $t\to -\infty$}.\end{cases}\] So, it follows that $h'(t)\leq 0$ for $t>0$ and $h'(t)\geq 0$ for $t<0$.  We get $h'_+(0)=\phi_x(0,1)-\phi_x(1,0)$. Now observe that by convexity $\phi(1,0)=\phi(-1,0)\geq \phi(0,1)-\phi_x(0,1)-\phi_z(0,1)=-\phi_x(0,1) $. So we conclude $h'_+(0)=\phi_x(0,1)-\phi_x(1,0)\geq -2\phi(1,0)$. The argument for $h'_-(0)$ is completely analogous. 
  
Finally, by convexity, recalling Remark \ref{norma} and the fact that $\phi_x(1,0)=\phi(1,0)$, we get for $t>0$
 \[h(t)=\phi(t,1)-\phi_x(1,0)t\geq \phi(1,0)+\phi_x(1,0)(t-1)+ \phi_z(1,0)-\phi_x(1,0)t=\phi_z(1,0)\] whereas for $t<0$ recalling that $\phi_x(-1,0)=-\phi_x(1,0)$, and $\phi(-1,0)=\phi(1,0)$,
 \[ h(t)=\phi(t,1)+\phi_x(1,0)t\geq \phi(-1,0)-\phi_x(1,0)(t+1)- \phi_z(1,0)+\phi_x(1,0)t=-\phi_z(1,0).\] 
 Again by convexity we conclude that 
 \[ \phi(0,-1)\geq \phi(1,0)- \phi_x(1,0)-\phi_z(1,0)=-\phi_z(1,0)\] and then $\phi_z(1,0)\geq -\phi(0,-1)=-\phi(0,1)$. On the other hand also \[ \phi(0,1)\geq \phi(1,0)- \phi_x(1,0)+\phi_z(1,0)=\phi_z(1,0)\] and therefore $-\phi_z(1,0)\geq -\phi(0,1)$. The two inequalities give the conclusion. 
 \end{proof}
 \begin{lemma}
\label{lemmaconvex}  Assume  \eqref{phiregular}. Let $I\subseteq \R$ be an open bounded interval and $u:I\to\R$ be a convex $C^2$ function such that 
$\lim_{x\to \partial I} u(x)=+\infty$ and 
\[\exists c>0\qquad \left(\phi_{x}(u', -1)\right)'<\frac{1}{c}\qquad \text{ for every $x\in I$.}\]   
Then for every $p'\in \{(x,z)\ | z\geq  u(x)\}$ such that $(x,u(x))\in\partial (p'+cW_{\phi^0})$ for some $x\in I$, there holds that  $p'+cW_{\phi^0}\subseteq \{(x,z)\ | z\geq u(x)\}$.  \end{lemma}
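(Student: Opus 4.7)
The plan is to reduce the two-dimensional inclusion $K := p'+cW_{\phi^0} \subseteq E := \{(x,z) : z\ge u(x)\}$ to a one-dimensional maximum-principle comparison between $u$ and the lower boundary of $K$, and then to exploit the strict anisotropic-curvature inequality to rule out any positive value of the difference.

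First, I would write $K$ as the region between a convex lower boundary $\underline k$ and a concave upper boundary $\overline k$ defined on the $x$-projection $[a,b]$ of $K$. The hypothesis that the graph point $(x_0,u(x_0))$ lies on $\partial K$, combined with $p'\in E$, places the contact tangentially on the lower boundary: $\underline k(x_0)=u(x_0)$ and $\underline k'(x_0)=u'(x_0)$ (this is the configuration in which the tangent Wulff shape meets the convex epigraph consistently with $p'\in E$). The containment $K\subseteq E$ is then equivalent to the scalar inequality $f(x) := u(x)-\underline k(x)\le 0$ on $[a,b]$.

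Next, I would identify the hypothesis with a pointwise anisotropic-curvature comparison. A short computation using the $1$-homogeneity of $\phi$ and Remark \ref{norma} gives, for any smooth graph $z=v(y)$ with downward outer normal $(v',-1)$,
\[
(\phi_x(v',-1))' = v''(y)\,\phi_{xx}(v'(y),-1),
\]
and this is the anisotropic mean curvature of the graph with respect to that normal. Hence the hypothesis says $G = \graph u$ has anisotropic mean curvature strictly less than $1/c$. Since $\partial K$ is a rescaled Wulff shape, by \eqref{wcu} (with $N=1$) its anisotropic mean curvature is identically $1/c$; applied to the lower boundary this reads $\underline k''(y)\phi_{xx}(\underline k'(y),-1)\equiv 1/c$.

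The key point is then that at any critical point $x^*$ of $f$, the equality $u'(x^*)=\underline k'(x^*)$ makes the two $\phi_{xx}$ factors coincide, and subtracting the two curvature identities gives $f''(x^*)=u''(x^*)-\underline k''(x^*)<0$. Every critical point of $f$ is therefore a strict local maximum. A smooth real function on an interval whose every critical point is a strict local max can have at most one critical point, since two consecutive ones would produce an interior local minimum in between (a critical point with $f''\ge 0$), contradicting the previous step. Because the tangential contact makes $x_0$ a critical point with $f(x_0)=0$, it is the unique one and a global maximum, so $f\le 0$ on $[a,b]$, giving $\underline k\ge u$ and hence $K\subseteq E$. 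A final check that $[a,b]\subseteq I$ follows from the same monotonicity: if $[a,b]$ protruded past $\partial I$, then $u\to+\infty$ together with $\underline k$ bounded would force $f\to+\infty$, incompatible with the unique-critical-point structure just derived.

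The main obstacle is the translation of the anisotropic curvature condition into the scalar relation $f''(x^*)<0$ at critical points of $f$; everything downstream is elementary calculus. A secondary delicate point is justifying that the contact at $(x_0,u(x_0))$ must be tangential and lie on the lower boundary of $K$, for which one invokes the smoothness of $\partial K$ under \eqref{phiregular} together with $p'\in E$ to match the outer normals of $K$ and of $E$ at the contact.
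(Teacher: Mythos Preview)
Your approach is essentially the paper's: both reduce the inclusion to comparing $u$ with the lower boundary of $K$ (the paper calls it $w$) and use the strict curvature inequality to rule out a second contact. The only technical difference is that the paper, instead of looking at critical points of $f=u-\underline k$, integrates $(\phi_x(u',-1))'<(\phi_x(w',-1))'$ from the tangent point $x_0$ to any $y$, obtaining $\phi_x(u'(y),-1)<\phi_x(w'(y),-1)$, and then uses the strict monotonicity of $t\mapsto\phi_x(t,-1)$ from Lemma~\ref{lemlem} to conclude $u'(y)<w'(y)$ on either side of $x_0$; this directly contradicts any further coincidence $u(x_0+b)=w(x_0+b)$. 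Your critical-point formulation is an equivalent repackaging of the same comparison.

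On the point you flag as ``delicate'': note that $p'\in E$ together with $(x_0,u(x_0))\in\partial K$ does \emph{not} by itself force tangential contact on the lower arc of $\partial K$; for instance, if $p'\in\partial E$ one can have $\partial K$ crossing $\partial E$ transversally, and then the conclusion $K\subseteq E$ actually fails. The paper's proof makes the same implicit tangency assumption in its local-inclusion step (``the inequality on curvatures implies that there exists a neighborhood $U$ \ldots''), so this is not a defect of your argument relative to the paper's. In the only place the lemma is used (the approximation step of Theorem~\ref{translator2}) the center $p'$ is in effect chosen to make the contact tangential, so the application is unaffected; but as a proof of the lemma as literally stated, both arguments share this small gap.
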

\begin{proof} We may assume without loss of generality that $c=1$, the other cases can be obtained by rescaling.   Let $F=\{(x,z)\ |\ z\geq u(x)\}$  be the epigraph of $u$. Then by definition
\[H_\phi((x,u(x)), F)=\left(\phi_{x}\left(\frac{u'}{\sqrt{1+(u')^2}},-\frac{1}{\sqrt{1+(u')^2}}\right)\right)'=\left(\phi_{x}(u', -1)\right)'\]
where the last equality comes from the fact that $\phi$ is a norm, so it is positively $1$-homogeneous, and then $\phi_x(tx,tz)=\frac{t}{|t|}\phi_x(x,z)$ for all $t\neq 0$. 
 Recalling \eqref{wcu}, and using the assumptions, we have that  $H_\phi(W_{\phi^0})=1> H_\phi((x,u(x)), F)$. 
 If $(x,u(x))\in \partial ( p'+W_{\phi^0})\cap \partial  F$, then the inequality on curvatures implies that there exists a neighborhood $U$  of $(x,u(x))$ such 
 that $p'+W_{\phi^0}\cap U\subseteq F\cap U$. 
Suppose by contradiction  that this inclusion is not satisfied  for $U=\R^2$. Therefore there exists an interval $(x-b', x+b)$ such that $(x-b',x+b) \subseteq U\cap I$ and  either $ (x+b, u(x+b)) \ \in\partial ( p'+W_{\phi^0})\cap \partial  F$ or $(x-b', u(x-b'))\ \in\partial ( p'+W_{\phi^0})\cap \partial  F$. Assume that the first case is verified, the other case is completely analogous. Since $W_{\phi^0}$ is a convex $C^2$ set, we may assume (eventually reducing $b'$), that $\partial ( p'+W_{\phi^0})\cap (x-b',x+b)\times \R$ coincides with the graph of a  $C^2$ convex function $w:(x-b',x+b)\to \R$ such that   $u\leq w$ for all $y\in (x-b',x+b)$. 

In particular we get  that $u(x)=w(x)$, $u'(x)=w'(x)$,  $u(x+b)=w(x+b)$,   and 
 \[\left(\phi_{x}\left(u',-1\right)\right)'< \left(\phi_{x}\left(w',-1\right)\right)' \qquad \text{ for all $y\in (x-b',x+b)$.}\]  
Integrating  the previous inequality between $x$ and $y\in (x,x+b)$, we get, 
\[ \phi_{x}\left( u'(y), -1 \right)<\phi_{x}\left( w'(y), -1 \right)\]
which  by Lemma \ref{lemlem} gives $u'(y)<w'(y)$ for all $y\in (x,x+b)$, which is in contradiction with the fact that $u(x)=w(x)$ and $u(x+b)=w(x+b)$. 
\end{proof} 

 We prove existence of complete translating graph in $\R^2$. We start with the case of regular anisotropies and then obtain the other cases by approximation. 
 \begin{theorem}\label{translator2} 
  There exists a complete  graphical translating solution to \eqref{mcf} which is   given    (up to dilations  and translations) by   $\partial E_t=\partial E+te_2$, where $\partial E$ solves \eqref{trans0}. In particular, $\partial E$ is the graph  of a convex  function $u:I\to \R$ , where $I$ is an interval. If  \eqref{phiregular} holds, then $u$ is characterized  as   the unique solution to 
\begin{equation} \label{ode}\begin{cases}  \psi\left(-u',1\right)\phi_{xx}\left(-u',1\right)u''=1   \\
u(0)= u'(0)=0. \end{cases}\end{equation} 

\end{theorem}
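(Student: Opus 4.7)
The plan is to handle the regular case under \eqref{phiregular} first, by reducing \eqref{trans} in one dimension to the ODE \eqref{ode} and solving it essentially explicitly, and then to obtain the general case by approximation through the stability statement in Theorem \ref{ex}. In the regular case, we look for a convex translator normalized by $u(0)=u'(0)=0$ (which we may do up to a horizontal translation, placing the minimum of $u$ at the origin). In one dimension, the chain rule gives
\[
-\frac{d}{dx}\phi_x(-u',1) \;=\; \phi_{xx}(-u',1)\,u'',
\]
so \eqref{trans} with $c=1$ is exactly \eqref{ode}. Setting $v := u'$, this reduces to the autonomous first-order equation
\[
v'(x) \;=\; \frac{1}{\psi(-v,1)\,\phi_{xx}(-v,1)}, \qquad v(0)=0,
\]
whose right-hand side is smooth and strictly positive by \eqref{phiregular} and \eqref{asspsi}; Cauchy--Lipschitz produces a unique smooth strictly increasing maximal solution.

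Separating variables yields $G(v(x)) = x$ with
\[
G(t) \;:=\; \int_0^t \psi(-s,1)\,\phi_{xx}(-s,1)\,ds.
\]
By Lemma \ref{lemlem}, $\phi_{xx}(-s,1)$ is comparable to $1/\phi(-s,1)^3$, and $1$-homogeneity gives $\phi(-s,1)\sim|s|\phi(\mp 1,0)$ and $\psi(-s,1)\sim|s|\psi(\mp 1,0)$ as $s\to\pm\infty$, so the integrand decays like $1/s^2$. Hence $b := G(+\infty)$ and $a := -G(-\infty)$ are finite and positive, and $v = G^{-1}$ is a smooth strictly increasing function on $I := (-a,b)$ with $v(x)\to\pm\infty$ as $x\to b^-, -a^+$. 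Integrating, $u$ is smooth and strictly convex on $I$. From $b - G(v) = \int_v^\infty \psi(-s,1)\phi_{xx}(-s,1)\,ds$ being comparable to $1/v$ as $v\to+\infty$, I get $v(x)$ comparable to $1/(b-x)$, hence $u(x)\to+\infty$ as $x\to b^-$, and analogously at $x=-a^+$. The graph of $u$ over $I$ is therefore a complete convex curve with vertical tangents at its endpoints, providing the desired solution of \eqref{trans0}. Uniqueness under the normalization $u(0)=u'(0)=0$ is immediate from Cauchy--Lipschitz applied to the $v$-equation.

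For a general anisotropy $\phi$ as in Definition \ref{defanisotropy}, I would approximate $\phi$ by norms $\phi_n$ satisfying \eqref{phiregular} with $\phi_n\to\phi$ locally uniformly, run the previous construction for each $\phi_n$ to obtain translators $u_n : I_n = (-a_n,b_n) \to \R$ with $u_n(0)=u_n'(0)=0$, and pass to a subsequential limit. The explicit formula for $G_n$ combined with the two-sided estimate in Lemma \ref{lemlem} for $\phi_n$ yields uniform two-sided bounds on $a_n, b_n$ (depending only on $\phi_n(\pm 1,0)$, $\phi_n(0,1)$, $\psi(\pm 1,0)$ and $\psi(0,1)$), so the intervals $I_n$ neither collapse to a point nor escape to all of $\R$. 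Convexity of $u_n$ with $u_n(0)=u_n'(0)=0$ gives uniform local Lipschitz bounds on compact subsets of $I:=(-a,b)$, so Arzel\`a--Ascoli provides a locally uniformly convergent subsequence $u_n\to u$ with $u$ convex. The stability of level-set flows in Theorem \ref{ex} then ensures that the vertical translations of the subgraph of $u$ solve \eqref{mcf} for $\phi$ in the distributional level-set sense, producing the desired complete translator.

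The main obstacle lies in this last approximation step: one must simultaneously control the interval $I_n$ away from degeneration and preserve the boundary blow-up $u_n\to+\infty$ at $\partial I_n$ in the limit. The integral representation of $G_n$ is the decisive tool, since the asymptotics of $\psi(-s,1)(\phi_n)_{xx}(-s,1)$ at $s=\pm\infty$ depend only on $\phi_n$ evaluated at the horizontal and vertical directions, which are stable under local uniform convergence; the quantitative form of Lemma \ref{lemlem} turns this into uniform estimates on $a_n, b_n$ and on the rate at which $u_n(x)\to+\infty$ near $\partial I_n$, so that the limit $u$ inherits the completeness property.
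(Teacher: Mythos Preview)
Your argument in the regular case matches the paper's: both reduce \eqref{trans} to the autonomous ODE for $v=u'$, separate variables, and read off the finite maximal interval from the integrability of $\psi(-s,1)\phi_{xx}(-s,1)$. The paper phrases this via the bounded function $f(v)=(1+v^2)\psi(-v,1)\phi_{xx}(-v,1)$ and a comparison with $\tan$, but the content is the same.

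Where you diverge from the paper is in the approximation step. You argue for uniform two-sided bounds on $a_n,b_n$ and uniform blow-up rates directly from the integral $G_n(t)=\int_0^t\psi(-s,1)(\phi_n)_{xx}(-s,1)\,ds$ together with Lemma~\ref{lemlem}. Two points deserve care. First, the estimate $\phi_{xx}\asymp 1/\phi^3$ in Lemma~\ref{lemlem} involves constants $c_1,c_2$ that depend on the uniform convexity of $\phi_n^2$ and are \emph{not} stable as $\phi_n\to\phi$ when $\phi$ fails \eqref{phiregular}; so your $1/s^2$ asymptotics give finiteness of $a_n,b_n$ for each $n$ but not the uniform bounds you claim. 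The paper obtains the uniform upper bound $|I_n|\le 4(\phi_n(1,0)+\phi_n(0,1))\max_{\mathbb S^1}\psi$ instead through the auxiliary function $h(t)=\phi(t,1)-\phi(1,0)|t|$ and its properties (items (1)--(3) of Lemma~\ref{lemlem}), which depend only on the values of $\phi_n$ in the coordinate directions and therefore are stable under locally uniform convergence. Second, and more substantially, to prevent the limit from degenerating and to ensure the limiting curve is complete, the paper does \emph{not} chase uniform blow-up rates at all: it invokes the geometric Lemma~\ref{lemmaconvex}, using the curvature bound $H_{\phi_n}(\cdot,F_n)<1/c$ with $c<\min_{\mathbb S^1}\psi$ to show that each epigraph $F_n=\{z\ge u_n\}$ contains a translate of a fixed Wulff ball $c'W_{\phi^0}$ touching $(0,0)$. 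The convex sets $F_n$ then subconverge in the local Hausdorff sense to a nondegenerate convex epigraph $F$, and stability of the level-set flow yields the translator property. Your route can in principle be completed (for instance, a uniform lower bound $|I_n|\ge 2\phi_n(1,0)\min_{\mathbb S^1}\psi$ follows from $\int_{\R}(\phi_n)_{xx}(-s,1)\,ds=2\phi_n(1,0)$), but the passage to a \emph{complete} limit via ``uniform rates'' is not justified by the part of Lemma~\ref{lemlem} you cite, and the paper's interior-Wulff-ball argument is both cleaner and what is actually used.
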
  

\begin{proof}\ \
 We fix $c=1$ in \eqref{trans0}, since the case $c\neq 0$ can be obtained by dilations.
 
 We start considering the case in which \eqref{phiregular} holds and then the general case will be obtained by approximation.
 We observe that  the equation \eqref{trans0}  when $\partial E$ is the  graph of a function $u:\Omega\subseteq \R\to \R$ reads 
\[\psi\left(-u',1\right) \left(\phi_{x}\left(-u',1\right)\right)_x=-1\qquad x\in \Omega. \]
Up to translations we may also  assume that $u(0)=0 $.

The function defined as 
\[f(u'):=(1+u'^2)\, \psi\left(-u',1\right)\phi_{xx}\left(-u',1\right) \]
  is continuous, and moreover  by   Lemma \ref{lemlem},
 \[c_1  \psi\left(\frac{-u'}{\sqrt{1+u'^2}},\frac{1}{\sqrt{1+u'^2}}\right)\leq f(u')\phi^3\left( \frac{-u'}{\sqrt{1+u'^2}},\frac{1}{\sqrt{1+u'^2}}\right)\leq 
 c_2 \psi\left(\frac{-u'}{\sqrt{1+u'^2}},\frac{1}{\sqrt{1+u'^2}}\right) \] 
  so there exist two constants $0<\bar c_1\le \bar c_2$ such that $\bar c_1\leq f(u')\leq \bar c_2$. 

We assume that $u'(0)=0$ and we define $v(x)=u'(x)$. Then $v$ is a solution to 
\begin{equation}\label{ode1} \begin{cases} f(v(x))v'(x)=1+(v(x))^2\\ v(0)=0. \end{cases} \end{equation} 
Note that   $v(x)$   is defined in a   maximal  interval $I$ such that $ (-\bar c_1\pi, \bar c_1\pi)\subseteq I \subseteq (-\bar c_2\pi, \bar c_2\pi)$, that $v$ is strictly increasing  and that 
\[\tan \frac{x}{\bar c_2}  \leq v(x) \leq \tan \frac{x}{\bar c_1}.\]  
In particular  for every $\alpha$ the solution $v_\alpha$  to \eqref{ode1} with initial data $v_\alpha(0)=\alpha$ is obtained as $v_\alpha(x)=v(x+\beta)$ for some $\beta\in \R$. This implies that, up to translations, we may assume that $u'(0)=0$.

Finally we observe that the length of the maximal interval $I$ of existence for the solution to \eqref{ode} is actually  bounded independently of the constants $c_1,c_2$ appearing in Lemma \ref{lemlem}. Assume that $I=(a,b)$ and integrate \eqref{ode} in $(a,b)$, recalling that $I$ is the maximal interval of existence we obtain that 
\begin{equation}\label{integral}0\leq b-a=|I|=\int_{-\infty}^{+\infty} \psi(t,-1)\phi_{xx}(-t,1)dt\leq \max_{\mathbb{S}^1}\psi \int_{-\infty}^{+\infty} \sqrt{1+t^2}\phi_{xx}(-t,1)dt.\end{equation} 
 Recalling the definition and the properties of the function $h$ in Lemma \ref{lemlem} we get for $M> 0$, 
 \begin{eqnarray*}  \int_0^{M} \sqrt{1+t^2}h''(-t)dt &=&-\sqrt{1+M^2} h'(-M)+h'_{-}(0)-\int_{-M}^0\frac{t}{\sqrt{1+t^2}}h'(t)dt\\ &\leq& -\sqrt{1+M^2} h'(-M)+h'_{-}(0)+ h(0)- h(-M)\\  &\leq &\sqrt{1+M^2} h'(-M)+ 2\phi(1,0)+  2\phi(0,1) .\end{eqnarray*}  Now we observe that as $M\to +\infty$, $\sqrt{1+M^2} h'(-M)\to 0$ since $h'\geq 0 $ in $(-\infty, 0)$, is increasing and  $\int_{-\infty}^0 h'(t)dt<+\infty$. Therefore, in the previous inequality we obtain 
 \[\int_0^{+\infty}  \sqrt{1+t^2}\phi_{xx}(-t,1)dt\leq 2\phi(1,0)+ 2\phi(0,1) . \]
 With a completely analogous argument we get also that
  \[\int_{-\infty}^0 \sqrt{1+t^2}\phi_{xx}(-t,1)dt\leq 2\phi(1,0)+2\phi(0,1). \] This implies by \eqref{integral} that 
\begin{equation}\label{integral2}
|I|\leq  4(\phi(1,0)+ \phi(0,1) )\max_{\mathbb{S}^1}\psi.\end{equation} 
Assume now that \eqref{phiregular} does not hold. Let $\phi^n$ be a sequence of norms which satisfy \eqref{phiregular},  and such that $\phi^n\to \phi$ locally uniformly. 
By the previous arguments, for every $n$ we get a convex function $u^n$  which solves \eqref{ode} in an interval $I^n $. First of all we observe that by \eqref{integral2} the intervals $I^n$ are equibounded, and  converge in Hausdorff sense, up to subsequences to a limit interval $I$. 
 Moreover, since $\partial E^n$ is a solution to \eqref{trans0}, we get that  (recalling that the normal vector is $(-(u^n)', 1)$), 
\[H_{\phi^n}(p, E^n)= -\frac{1}{\sqrt{1+((u^n)')^2} \ \psi \left(\frac{-(u^n)',}{\sqrt{1+((u^n)')^2}},\frac{1}{\sqrt{1+((u^n)')^2}}\right)}
\geq -\frac{1}{\min_{\mathbb{S}^1} \psi}>-\frac{1}{c}\] where $0<c<\min_{\mathbb{S}^1} \psi$. 
Note that the set $F_n=\{(x,z) \ | \ x\in I^n, z\geq u^n(x)\}$ is a convex set, and moreover by the previous estimate, we get that at every 
$p\in \partial F_n$,  $H_{\phi^n}(p, F_n)<\frac{1}{c}$. In particular this implies by 
Lemma \ref{lemmaconvex} that  if $p'^n\in F_n$ is such that $p\in \partial(p'^n+c W_{(\phi^n)^0})$ (so in particular $|p-p'^n|\leq \text{diam }(c W_{(\phi^n)^0})$) 
then $p'^n+c\ W_{(\phi^n)^0}\subseteq F_n$.  Recalling that $\phi^n\to \phi$ locally uniformly, we get for arbitrary and now fixed $c'<c$ and  $n$ sufficiently large  
 \[p'^n+c' \ W_{\phi^0}\subseteq F_n  \] where $p'^n\in F_n$ is such that 
 $\partial(p'^n+c \ W_{(\phi^n)^0})\cap \partial F_n\neq \emptyset$.  In particular if we take $p'^n\in F_n$ such that $(0,0)\in \partial(p'^n+c \ W_{(\phi^n)^0})$, then, $p'^n\to p' $ up to subsequence and we get that for $n$ sufficiently large w.l.o.g. $p'+c' \ W_{\phi^0}\subseteq F_n  $.  
 
  Note that $F_n$ are epigraphs of convex functions such
  that $(0,0)\in \partial F_n$, and the previous estimates imply that the sequence of  convex sets $F_n$  are  contained in the strips $I^n\times [0,+\infty)$, which converge to $I\times [0, +\infty)$, and contain   $p'+c' \ W_{\phi^0}$. This implies that up to subsequences, the sets $F_n$ converge locally in Hausdorff sense to a convex set $F$, such that  $F$ is contained in the strip $I\times [0, +\infty)$, and $(0,0)\in \partial F$.  Moreover $F$ is the epigraph of a convex function $u$, and $u_n$ converges to $u$ locally uniformly. By passing to the level set formulation and using the stability with respect to locally uniform convergence of the distributional solutions to \eqref{pdelevel}, see \cite{cmnp2}, we get that $\partial F$ solves \eqref{trans0}.

  \end{proof}

\begin{remark}\upshape\label{remc1}   In the purely crystalline case, that is, when  $W_{\phi^0}$   is a convex  polygon in $\R^2$ and the mobility is the natural one, that is $\psi=\phi$,
we may construct directly a complete  translating solution  $\partial E_t =\partial E_0 + t e_2$, with a similar argument  as the one used in \cite{ggh} to construct self similar evolving crystals.   

  We fix $\nu_1, \dots, \nu_{k}$ as the ordered  set of adjacent normal orientations of   $\partial W_{\phi^0}$ with $\nu_i\cdot e_2<0$ and let 
 $\Delta(\nu_j)$  be the length of the edge of $W_{\phi^0}$ having $\nu_j$ as exterior normal.  
 
 We  construct  $\partial E_0$  as a  polygonal curve consisting  of a finite union of  segments  $S_1, \dots, S_{k}$ and two half-lines $S_0= (0, +\infty)e_2$, 
 $S_{k+1}=Le_1+(\hat L, +\infty)e_2$, where $L>0$ and $\hat L\in\R$ will be chosen later. In particular, $\partial E_0\cap (0,L)\times\R$ is
  the graph of a convex piecewise linear function $u_0: (0,L)\to \R$, with $u_0(0)=0$, $u_0(L)=\hat L$. 
For every $i\in \{1, \dots, k\}$ we require that $(-\nabla u_0(x), 1)/\sqrt{1+|\nabla u_0(x)|^2}=-\nu_i$ for all $x$ such that $(x, u_0(x))\in S_i$. 
Recalling Definition \ref{defanisotropy} (see also \cite{ggh, gp1, gp2})
  the crystalline curvature at every $p_i\in S_i$  is given by 
 \begin{equation}\label{cris}H_\phi(p_i, E_0)= -\frac{\Delta(\nu_i)}{L_i}\end{equation} 
 where $L_i$ is the length of the segment $S_i$.

Now we need to impose that the vertical speeds of the segments agree, i.e.,  
 \[ 
 \frac{\phi(\nu_j) H_\phi(p_j, E_0)}{\nu_j\cdot e_2}=\frac{\phi(\nu_{j-1}) H_\phi(p_{j-1}, E_0)}{\nu_{j-1}\cdot e_2}\,, \quad 2\le j\le k.\]
 Recalling  \eqref{cris},   we then get 
 \[
 L_j=  \frac{\phi(\nu_j)\Delta(\nu_j) \ \nu_{j-1}\cdot e_2}{\phi(\nu_{j-1})\Delta(\nu_{j-1})\ \nu_{j}\cdot e_2}\,L_{j-1}, \quad 2\le j\le k.
 \]
If we fix  $L_1=-\phi(\nu_1)\Delta(\nu_1)/( \nu_1\cdot e_2)$, by the previous equation  the other lengths satisfy 
$L_j=-\phi(\nu_j)\Delta(\nu_j)/(\nu_j\cdot e_2)$ for  all $2\le j\le k$. Therefore, recalling \eqref{cris}, we get that  $\partial E_0$ is a solution to  \eqref{trans} with $c=1$ and this implies that $\partial E_t= \partial E_0+te_2$. 
\end{remark} 

Finally, using the translating graphs obtained in  Theorem \ref{translator2}, we can construct grim reaper solutions, that is,
translating hypersurfaces in $\R^N$ asymptotic to  two parallel hyperplanes.

\begin{proposition} \label{dilded} 
For every $e\in \mathbb{S}^{N-1}$  there exists a complete graphical translator $\partial E$ for \eqref{mcf}  given by the graph  of a function $v_e(x) :=u_e(x\cdot e) $,  
where $u_e: I\to \R$ is a convex function and $I$ is an interval.  
\end{proposition}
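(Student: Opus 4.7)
The plan is to reduce to the two-dimensional grim reaper already constructed in Theorem \ref{translator2}. Given $e\in\S^{N-1}$, define the restricted anisotropy and mobility
\[
\tilde\phi(a,b):=\phi(ae,b),\qquad \tilde\psi(a,b):=\psi(ae,b),\qquad (a,b)\in\R^2.
\]
Since $|e|=1$, $\tilde\phi$ and $\tilde\psi$ inherit positive $1$-homogeneity, convexity, and positivity off the origin from $\phi$ and $\psi$, so they are norms on $\R^2$ and satisfy the analogue of \eqref{asspsi}; moreover $\tilde\phi$ inherits \eqref{phiregular} whenever $\phi$ does, being the restriction to a $2$-plane of a $C^2$ function with uniformly convex square. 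Applying Theorem \ref{translator2} to the pair $(\tilde\phi,\tilde\psi)$ produces a convex $u_e\colon I\to\R$ whose graph is a complete planar translator with speed $c=1$.

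Set $v_e(x):=u_e(x\cdot e)$ on the slab $\Omega_e:=\{x\in\R^N:\,x\cdot e\in I\}$; its Hessian equals $u_e''(x\cdot e)\,e\otimes e\ge 0$, so $v_e$ is convex, and $v_e\to+\infty$ at $\partial\Omega_e$. To check that $\graph(v_e)$ solves \eqref{trans}, assume first \eqref{phiregular} and write $s:=x\cdot e$, so that $\nabla v_e=u_e'(s)\,e$. By the chain rule,
\[
-\div_x\bigl(\nabla_x\phi(-\nabla v_e,1)\bigr)
=u_e''(s)\sum_{i,k=1}^{N}\phi_{p_ip_k}(-u_e'(s)e,1)\,e_ie_k
=u_e''(s)\,\tilde\phi_{aa}(-u_e'(s),1),
\]
since the iterated contraction of the Hessian of $\phi$ (in its first $N$ arguments) with $e$ is exactly the second $a$-derivative of $\tilde\phi$. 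The two-dimensional ODE \eqref{ode} satisfied by $u_e$ then identifies this quantity with $1/\tilde\psi(-u_e'(s),1)=1/\psi(-\nabla v_e,1)$, which is \eqref{trans} with $c=1$.

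For a general $\phi$ not satisfying \eqref{phiregular}, follow the approximation strategy from the proof of Theorem \ref{translator2}: choose $\phi^n\to\phi$ locally uniformly with each $\phi^n$ satisfying \eqref{phiregular}, form $\tilde\phi^n$, build $v_e^n$ by the regular case above, and extract a locally uniform limit by combining the uniform bound on $|I^n|$ from \eqref{integral2} with the stability of the distributional level-set flow of \cite{cmnp2}. The only step requiring genuine verification is the chain-rule identification of the $(N+1)$-dimensional divergence with the second $e$-derivative of $\tilde\phi$; once that is granted, the entire $(N+1)$-dimensional problem collapses to the scalar ODE \eqref{ode}, which has already been solved, so no real obstacle remains.
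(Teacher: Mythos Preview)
Your proof is correct and follows essentially the same route as the paper: define the restricted two-dimensional norms $\tilde\phi(a,b)=\phi(ae,b)$ and $\tilde\psi(a,b)=\psi(ae,b)$ (the paper calls them $\phi^e,\psi^e$), apply Theorem \ref{translator2} to obtain $u_e$, set $v_e(x)=u_e(x\cdot e)$, verify \eqref{trans} via the chain-rule identity $\sum_{i,k}\phi_{p_ip_k}(-u_e'e,1)e_ie_k=\tilde\phi_{aa}(-u_e',1)$, and treat the non-regular case by approximation. Your write-up is slightly more explicit about the chain-rule step and the convexity of $v_e$, but there is no substantive difference in strategy.
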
 

\begin{proof} Assume first that \eqref{phiregular} holds and define 
$\phi^e,  \psi^e:\R^2\to\R$ as the projections of $\phi, \psi$, that is $\phi^e(t ,z):=\phi(t e,z)$, $\psi^e(t,z):=\psi(te,z)$ for every $t\in\R, z\in \R$. Then, since  also $\phi^e$ satisfies \eqref{phiregular},  we can apply Theorem \ref{translator2} with $\phi^e, \psi^e$  and obtain the convex function 
$u_e: I\to \R$ as the unique solution to  \[\begin{cases}  \psi^e\left(-u',1\right)\phi^e_{xx}\left(-u',1\right)u''=1   \\
u(0)= u'(0)=0. \end{cases}\]  
We now define the function $v_e(x):=u_e(x\cdot e)$, and we observe that $\nabla v_e(x)=u_e'(x)e$. In particular, we have 
\[ 
-\psi(-\nabla v_e,1)\div(\nabla_x\phi(-\nabla v_e,1))= \psi^e\left(-u_e',1\right)\phi^e_{xx}\left(-u_e',1\right)u_e''=1\] 
and so $v_e$ is a solution to \eqref{trans}. This implies that  its graph is a complete translating hypersurface for \eqref{mcf}. 

 In the general case, we  proceed by approximation  as in Theorem \ref{translator2}.  
 \end{proof} 
 
As in \cite{hof}, up to a rotation of the coordinate system, from the solutions in Proposition \ref{dilded}  one can easily construct \emph{tilted} grim reapers.

\begin{proposition} \label{tilted} 
For every $e,t\in \mathbb{S}^{N-1}$ and $\lambda\in\R$  there exists a complete graphical translator $\partial E$ for \eqref{mcf}  given by the graph  of a function 
$v(x) :=u(x\cdot e) + \lambda x\cdot t$,  where $u: I\to \R$ is a convex function depending on $e,t,\lambda$ and $I$ is an interval.  
\end{proposition}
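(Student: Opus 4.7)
The plan is to reduce Proposition \ref{tilted} to Theorem \ref{translator2} via a change of variables that absorbs the tilt into a modified two-dimensional anisotropy and mobility.

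First, assuming \eqref{phiregular}, I would introduce auxiliary norms on $\R^2$ by
\[
\tilde\phi(\sigma,\tau):= \phi(\sigma e - \lambda \tau t, \tau), \qquad \tilde\psi(\sigma,\tau):= \psi(\sigma e - \lambda \tau t, \tau),
\]
which are the pullbacks of $\phi,\psi$ along the injective linear map $L:\R^2\to\R^{N+1}$ given by $L(\sigma,\tau)=(\sigma e - \lambda\tau t, \tau)$. Positive $1$-homogeneity, convexity and positivity off the origin are immediate (the kernel of $L$ is trivial, since $\tau=0$ forces $\sigma=0$), and the identity $\mathrm{Hess}(\tilde\phi^2)=L^{T}\mathrm{Hess}(\phi^2)\,L$ together with $\mathrm{rank}(L)=2$ transfers the uniform convexity of $\phi^2$ to $\tilde\phi^2$, so \eqref{phiregular} is preserved.

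Next, I would apply Theorem \ref{translator2} to the pair $(\tilde\phi,\tilde\psi)$ to obtain a convex $u:I\to\R$ solving the one-dimensional ODE \eqref{ode} relative to these data, and set $v(x):= u(x\cdot e) + \lambda\, x\cdot t$. A direct chain-rule computation shows $\nabla v(x)=u'(x\cdot e)\,e + \lambda t$ and $\partial^2_{ij}v(x)=u''(x\cdot e)\,e_ie_j$, so the left-hand side of \eqref{trans} evaluated on $v$ reduces to $u''(x\cdot e)\,\langle e, (\mathrm{Hess}_q\phi)(-u'(x\cdot e)\,e - \lambda t,1)\,e\rangle$, which by construction of $\tilde\phi$ equals $u''(x\cdot e)\,\tilde\phi_{\sigma\sigma}(-u'(x\cdot e),1)$. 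Since $\psi(-\nabla v,1)=\tilde\psi(-u'(x\cdot e),1)$ as well, the PDE \eqref{trans} for $v$ becomes exactly the 1D ODE that $u$ already solves, so its graph is a complete translating hypersurface.

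For the general case without \eqref{phiregular}, I would follow the approximation strategy already used in Theorem \ref{translator2}: take regular $\phi^n\to\phi$ locally uniformly, apply the construction above to each $\phi^n$ to obtain tilted translators $v^n$, and pass to the limit using the level-set stability from \cite{cmnp2}. The only step requiring genuine verification is the first one—checking that $\tilde\phi$ inherits \eqref{phiregular}—since the remainder is a routine chain-rule identity plus a direct invocation of the already-established Theorem \ref{translator2}.
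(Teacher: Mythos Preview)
Your argument is correct. The definitions of $\tilde\phi,\tilde\psi$ as pullbacks along the injective linear map $L(\sigma,\tau)=(\sigma e-\lambda\tau t,\tau)$ inherit all the structural hypotheses, and the chain-rule identity
\[
-\div\big(\nabla_x\phi(-\nabla v,1)\big)=u''(x\cdot e)\,\tilde\phi_{\sigma\sigma}(-u'(x\cdot e),1),
\qquad
\psi(-\nabla v,1)=\tilde\psi(-u'(x\cdot e),1)
\]
reduces \eqref{trans} for $v$ exactly to the ODE \eqref{ode} for $(\tilde\phi,\tilde\psi)$, so Theorem~\ref{translator2} applies verbatim. The approximation step for non-regular $\phi$ goes through as you indicate.

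The paper itself gives no detailed proof here: it only remarks that, as in \cite{hof}, one obtains tilted grim reapers from the untilted ones in Proposition~\ref{dilded} ``up to a rotation of the coordinate system''. That is a geometric description---one rotates in the $(t,e_{N+1})$-plane in $\R^{N+1}$, which turns an untilted grim reaper (for a rotated anisotropy) into a tilted graph for the original $\phi,\psi$. Your approach is an analytic counterpart: instead of rotating the ambient hypersurface, you shear the two-dimensional anisotropy directly and stay in the ODE framework. The two are equivalent in effect, but yours is more explicit and self-contained (it never leaves the graphical setting and needs no separate argument that the rotated surface is again a graph translating vertically), while the paper's formulation is more conceptual and closer to the isotropic literature.
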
 

\section{Bowl solitons for cylindrical anisotropies and mobilities} \label{bowl} 
In this section we consider the case in which the mobility and the anisotropy satisfy the following assumption: 
There exist  two functions $F,G:[0, +\infty)\times[0, +\infty)\to [0, +\infty)$ and a norm $\xi:\R^N\to  [0, +\infty)$ such that 
\begin{equation}\label{cil} \phi(x,z)= F(\xi(x), z)\qquad\text{and}\qquad \psi(x,z)=G(\xi(x),z).
\end{equation} 
We  can extend $F,G$ to  the whole of $\R^2$ by letting $F(t,s)=F(-t,s)=F(t,-s)=F(-t,-s)$, and similarly for $G$.   
Note that, by the properties of $\phi, \psi$, the extended functions $F,G$ are norms on $\R^2$.

Under assumption \eqref{cil}, the Wulff shape associated to the anisotropy $\phi$ is cylindrical, in the sense that all the sections of the Wulff shape 
along the $e_{N+1}$-direction are homothetic.

\begin{proposition} \label{polar} 
Let $\phi:\R^{N+1}\to [0, +\infty)$ a norm which satisfies \eqref{cil}. Then $\phi^0(x,z)=F^0(\xi^0(x), z)$, where $F^0(t,s)=\max_{F(t',s')\leq 1} (tt'+ss')$ and 
$\xi^0(x)=\max_{\xi(x')\leq 1} x\cdot x'$. 
\end{proposition}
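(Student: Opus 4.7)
The proof is essentially a direct computation from the definition of the polar norm, split into an ``inner'' optimization over the $x$-variable and an ``outer'' optimization over the $\xi$-magnitude.

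First I would expand the definition:
\[
\phi^0(x,z) \;=\; \sup_{(x',s')\in\R^{N+1},\ \phi(x',s')\le 1} (x\cdot x' + z s') \;=\; \sup_{F(\xi(x'),s')\le 1} (x\cdot x' + z s').
\]
The goal is to identify this with $F^0(\xi^0(x),z) = \sup_{(t',s')\in\R^2,\, F(t',s')\le 1}(\xi^0(x)\,t' + z\,s')$. The natural move is to slice the constraint set by the value $t' := \xi(x')\in[0,\infty)$ and perform the inner optimization first:
\[
\phi^0(x,z) \;=\; \sup_{\substack{t'\ge 0,\,s'\in\R\\ F(t',s')\le 1}}\ \sup_{x'\in\R^N:\,\xi(x')=t'} \bigl(x\cdot x' + z s'\bigr).
\]

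The inner supremum is handled by the definition of $\xi^0$ combined with positive $1$-homogeneity of $\xi$. For $t'>0$ any admissible $x'$ equals $t'y'$ with $\xi(y')=1$, so $\sup_{\xi(x')=t'} x\cdot x' = t'\sup_{\xi(y')=1} x\cdot y' = t'\,\xi^0(x)$, where the last equality uses that $\xi^0(x) = \sup_{\xi(y')\le 1} x\cdot y'$ is attained on the unit sphere (since $\xi$ is a norm, hence even). For $t'=0$ both sides vanish. Substituting gives
\[
\phi^0(x,z) \;=\; \sup_{\substack{t'\ge 0,\,s'\in\R\\ F(t',s')\le 1}} \bigl(t'\,\xi^0(x) + z\,s'\bigr).
\]

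It remains to compare this with $F^0(\xi^0(x),z)$, whose supremum is taken over $t'\in\R$ (not just $t'\ge 0$). Here I would use two observations: $\xi^0(x)\ge 0$ because $\xi^0$ is a norm, and the extended $F$ is even in its first argument by construction, so $F(t',s')=F(|t'|,s')$. Consequently, for any $(t',s')$ with $F(t',s')\le 1$ the pair $(|t'|,s')$ is also admissible and gives a value $\xi^0(x)\,|t'|+z s'\ge \xi^0(x)\,t'+z s'$. Therefore the unrestricted sup over $t'\in\R$ coincides with the restricted sup over $t'\ge 0$, proving $\phi^0(x,z) = F^0(\xi^0(x),z)$.

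No step looks like a serious obstacle; the only mild subtlety is the two-sided/one-sided discrepancy in the range of $t'$, resolved cleanly by the evenness of $F$ and nonnegativity of $\xi^0(x)$. The other sanity check is that the extended $F$ is indeed a norm on $\R^2$ (so that $F^0$ makes sense as a norm), but that has already been observed in the paragraph preceding the proposition.
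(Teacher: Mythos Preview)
Your argument is correct and follows essentially the same route as the paper: both proofs slice the constraint $\phi(x',z')\le 1$ by the magnitude $t'=\xi(x')$, optimize over the direction of $x'$ to produce the factor $\xi^0(x)$, and then identify the remaining two-dimensional supremum with $F^0(\xi^0(x),z)$. The only cosmetic difference is that the paper parametrizes the slice via the inverse $F^{-1}_{z'}(1)$ of $t\mapsto F(t,z')$ rather than writing the outer optimization directly over $(t',s')$; your handling of the one-sided versus two-sided range of $t'$ via evenness of $F$ and $\xi^0(x)\ge 0$ is exactly the point the paper addresses by restricting to $t',s'\ge 0$ when $t,s\ge 0$.
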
 

\begin{proof} Fix $z\in \R$ and denote $F^{-1}_z$ the inverse of the function $t\in [0, +\infty) \to F(t,z)$. 
  \begin{eqnarray}\nonumber \phi^0(x,z)&= &\max_{F(\xi(x'), |z'|)\leq 1} (x\cdot x'+zz')= 
  \max_{|z'|\leq 1, \xi(x')\leq F^{-1}_{z'}(1)} (x\cdot x'+zz')\\ &=&\max_{|z'|\leq 1, \xi(h)\leq 1} (h\cdot x F^{-1}_{z'}(1)+zz')=
  \max_{|z'|\leq 1} (\xi^0(x) F^{-1}_{z'}(1)+zz'). \label{c1} \end{eqnarray}
  Now observe that, if $t,s\geq 0$, then
\[F^0(t,s)=\max_{F(t',s')\leq 1, t',s'\geq 0} (tt'+ss')=
\max_{0\leq s'\leq 1, 0\leq t'\leq F^{-1}_{s'} (1)} (tt'+ss')=  \max_{0\leq s'\leq 1} (t F^{-1}_{s'} (1)+ss'). \] 
Therefore, taking $t=\xi^0(x)$ and $s=z$, and  substituting in \eqref{c1} we get 
\[\phi^0(x,z)= F^0(\xi^0(x), z).\]
\end{proof}

Under assumption  \eqref{cil}, the  equation \eqref{trans} for graphical translators reads as follows: 
\begin{equation}\label{transcil} -\div\left( F_t(\xi(-\nabla u),1)\nabla \xi(-\nabla u)\right)=\frac{1}{G(\xi(-\nabla u),1)}.
\end{equation} 
We shall look for solutions having the same symmetries as the Wulff shape. Recalling from Proposition \ref{polar} that $\phi^0(x,z)= F^0(\xi^0(x),z)$,
 then  we look for solutions \[u(x)=v(\xi^0(x))\]  
 where $v:[0, \infty)\rightarrow \mathbb{R}$ is a convex function.
 
Recalling that for  $t\neq 0$, $\xi(tx)=|t|\xi(x)$,  we get $\nabla \xi(tx)=\frac{t}{|t|}\nabla \xi(x)$ and moreover since $\xi$ and $\xi^0$ are dual norms we get that $\xi(\nabla \xi^0(x))=1$ and $\xi^0(x)\nabla\xi(\nabla\xi^0(x))=x$.  For more details we refer to \cite[Section 2.1]{bp}. This implies that
\begin{eqnarray*} \xi(-\nabla u(x))&=&|v'(\xi^0(x))|\xi(\nabla\xi^0(x)))= |v'(\xi^0(x))|\\  \nabla \xi(-\nabla u(x))&=& -\frac{v'(\xi^0(x))}{|v'(\xi^0(x))|}\nabla \xi(\nabla\xi^0(x))= -\frac{v'(\xi^0(x))}{|v'(\xi^0(x))|}\frac{x}{\xi^0(x)}.\end{eqnarray*} 

We substitute these formulas in \eqref{transcil} and obtain 
\[\div\left( F_t(|v'(\xi^0(x))|,1)\frac{v'(\xi^0(x))}{|v'(\xi^0(x))|}\frac{x}{\xi^0(x)}\right)=\frac{1}{G(|v'(\xi^0(x))|, 1)}.\]
We compute the divergence,  recalling that $\div(\frac{x}{\xi^0(x)})=\frac{N-1}{\xi^0(x)}$ and that $\nabla\xi^0(x)\cdot x=\xi^0(x)$,  so that  
\[F_{tt}(|v'(\xi^0(x))|,1) v''(\xi^0(x))+F_t(|v'(\xi^0(x))|,1)\frac{v'(\xi^0(x))}{|v'(\xi^0(x))|}\frac{N-1}{\xi^0(x)}
=\frac{1}{G(|v'(\xi^0(x))|, 1)}.\]
and then, letting $r=\xi^0(x)$ and $w=v'$, we get
\begin{equation}\label{w1} w'(r)= \frac{1}{F_{tt}(|w(r)|,1)} \left(\frac{1}{G(|w(r)|,1) }-\frac{N-1}{r} \frac{w(r)}{|w(r)|}F_t(|w(r)|, 1)\right). \end{equation} 
If  $F(t,s)=G(t,s)=\sqrt{t^2+s^2}$ and $\xi(x)=|x|$,  we get exactly $w'= \left(1-\frac{N-1}{r}w\right)(1+w^2)$ which is the equation for radially symmetric graphical translators. 
   
Note that by Lemma \ref{lemlem}, if $F^2$ is uniformly convex and $F\in C^2(\R^2\setminus 0)$, there exist $0<c_1\leq c_2$ such that 
\begin{equation}\label{lemlem1} \frac{c_1}{F^3(\alpha,1)}\leq F_{tt}(\alpha,1)\leq \frac{c_2}{F^3(\alpha,1)}\qquad \forall \alpha\in \R.\end{equation}
Moreover since $F(\alpha,1)$ is an even convex function, if $F\in C^2(\R^2\setminus 0)$, we get that necessarily 
\begin{equation}\label{ft} F_t(0,1)=0\qquad \text{and } F_t(\alpha,1)>0 \ \ \forall \alpha>0.\end{equation}

 \begin{lemma} \label{lemma1} 
 Assume \eqref{cil}, with $F^2, G^2$ uniformly convex , and  $F,G\in C^2(\R^2\setminus \{0\})$.
 Then there exists  $w\in C^1(0, +\infty)$, which is positive, increasing and solves  
 \begin{equation}\label{sys1} \begin{cases} w'= \dfrac{F_t(w(r), 1)}{F_{tt}(w(r),1)} \left(\frac{1}{G(w(r),1)F_t(w(r), 1)}-\frac{N-1}{r} \right)\\ 
\lim_{r\to 0^+} w(r)=0. 
\end{cases} \end{equation} 
Moreover  $\lim_{r\to +\infty} \frac{w(r)}{r}=\frac{1}{(N-1) G(1,0)F(1,0)}$. 
 \end{lemma}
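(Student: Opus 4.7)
The proof I propose combines an equivalent integral reformulation with a shooting argument. Multiplying the ODE in \eqref{sys1} by $r^{N-1}$ and recognising the left-hand side as the derivative of a product, one obtains the divergence form
\begin{equation*}
\frac{d}{dr}\bigl(r^{N-1} F_t(w(r),1)\bigr) \;=\; \frac{r^{N-1}}{G(w(r),1)}.
\end{equation*}
Integrating from $0$ and using $F_t(0,1) = 0$ from \eqref{ft} together with the prescribed boundary value $w(0^+) = 0$ yields the equivalent integral identity
\begin{equation*}
F_t(w(r),1) \;=\; \frac{1}{r^{N-1}}\int_0^r \frac{s^{N-1}}{G(w(s),1)}\,ds,
\end{equation*}
which organises both existence and the asymptotic analysis.

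For existence I would shoot from small $\rho>0$. The right-hand side of \eqref{sys1} is $C^1$ on $\{r>0,\,w>0\}$, so Picard--Lindel\"of applies locally. The decisive geometric object is the \emph{nullcline} $\bar w(r)$ defined implicitly by $(N-1)G(\bar w,1)F_t(\bar w,1)=r$; strict monotonicity of $F_t(\cdot,1)$ (from \eqref{lemlem1}) and \eqref{ft} make $\bar w$ well-defined and $C^1$, with $\bar w(r)/r\to 1/((N-1)F_{tt}(0,1)G(0,1))$ as $r\to 0^+$ and $\bar w(r)/r\to L:=1/((N-1)F(1,0)G(1,0))$ as $r\to\infty$. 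The right-hand side of \eqref{sys1} is positive on $\{0<w<\bar w(r)\}$ and negative above. For each small $\rho>0$ I let $w_\rho$ be the solution of the IVP with $w_\rho(\rho)=\alpha_0\rho$, where $\alpha_0:=1/(N F_{tt}(0,1)G(0,1))$ is the slope prescribed by the leading order expansion of the integral identity at $r=0$. Since $\alpha_0<\lim_{r\to 0^+}\bar w(r)/r$, the initial datum lies strictly below the nullcline for small $\rho$, and the sign analysis forces $0<w_\rho(r)<\bar w(r)$ and $w_\rho'>0$ on all of $[\rho,\infty)$.

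The main obstacle is the passage $\rho\to 0^+$, since the ODE is singular at the origin. On every compact interval $[\delta,R]\subset (0,\infty)$ the family $\{w_\rho\}$ is uniformly bounded (by $\bar w(R)$) and, through the ODE, has uniformly bounded derivatives, so Arzel\`a--Ascoli yields a limit $w\in C^1(0,\infty)$ solving \eqref{sys1}. To pin down the boundary behaviour at $0$ I would use two barriers: the nullcline $\bar w$ as a supersolution, and for any $\epsilon\in (0,\alpha_0)$ the linear subsolution $\underline w_\epsilon(r)=\epsilon r$. A direct substitution using $F_t(w,1)=F_{tt}(0,1)w+O(w^2)$ shows that $\underline w_\epsilon$ is a subsolution on a right-neighborhood of $0$, the required inequality reducing to $N\epsilon F_{tt}(0,1)G(0,1)<1+o(1)$, which holds because $\epsilon<\alpha_0$. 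Standard ODE comparison on $[\rho,r_0]$ gives $\epsilon r\le w_\rho(r)\le\bar w(r)$, a sandwich preserved in the limit, hence $w(r)\to 0$ as $r\to 0^+$ while $w(r)>0$ for $r>0$.

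The asymptotic $w(r)/r\to L$ is then read off the integral identity. One first shows $w(r)\to\infty$: if $w$ were bounded, $1/G(w,1)$ would be bounded below, forcing $w'$ bounded below for large $r$ via \eqref{sys1}, a contradiction. Given $w(r)\to\infty$, positive $1$-homogeneity of $F$ and $G$ (Remark \ref{norma}) yields $F_t(w(r),1)\to F(1,0)$ and $G(w(s),1)/w(s)\to G(1,0)$. The integral identity thus reduces to $\int_0^r s^{N-1}/G(w(s),1)\,ds\sim F(1,0)\,r^{N-1}$; since $G(w(s),1)$ is monotone in $s$, a standard monotone-density/L'H\^opital argument gives $s^{N-1}/G(w(s),1)\sim (N-1)F(1,0)\,s^{N-2}$, equivalently $G(w(s),1)\sim s/((N-1)F(1,0))$. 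Combined with $G(w,1)\sim G(1,0)w$ this produces $w(s)/s\to 1/((N-1)F(1,0)G(1,0))=L$, as required.
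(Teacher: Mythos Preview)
Your skeleton coincides with the paper's: both shoot from small $\rho>0$, use the nullcline (the paper's $\alpha(r)=f^{-1}((N-1)/r)$, your $\bar w(r)$) to trap solutions in the strip $0<w<\bar w$, and pass to the limit via Arzel\`a--Ascoli. The paper takes the initial datum $w_\rho(\rho)=\alpha(\rho)/2$ and reads off $w(0^+)=0$ directly from $0\le w\le\alpha(r)\to 0$; your choice $w_\rho(\rho)=\alpha_0\rho$ together with the linear sub-barriers $\epsilon r$ reaches the same conclusion and additionally secures $w>0$ in the limit, which the paper asserts without an explicit lower bound.

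Where you genuinely diverge is in the asymptotics. The paper argues by contradiction from the differential inequality $w'\ge c\,F^3(w,1)F_t(w,1)\bigl(f(w)-f(\alpha)\bigr)$: if $f(w)-f(\alpha)\not\to 0$ one would get $w'\ge k w^3$ and finite-time blow-up, hence $w-\alpha\to 0$ and $w/r\to L$. Your route through the divergence form $\bigl(r^{N-1}F_t(w,1)\bigr)'=r^{N-1}/G(w,1)$ and its integrated version is more structural---it exploits that the ODE is exactly the radial reduction of \eqref{transcil}---and yields $G(w(r),1)/r\to 1/((N-1)F(1,0))$ directly. One caveat: the ``monotone-density/L'H\^opital'' step deserves one more line, since the integrand $s^{N-1}/G(w(s),1)$ is not itself monotone. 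What works is that monotonicity of $s\mapsto G(w(s),1)$ gives, for $\lambda>1$,
\[
\frac{(\lambda^N-1)r^N}{N\,G(w(\lambda r),1)}\;\le\;\int_r^{\lambda r}\frac{s^{N-1}}{G(w(s),1)}\,ds\;\le\;\frac{(\lambda^N-1)r^N}{N\,G(w(r),1)},
\]
while the integral identity makes the middle term asymptotic to $F(1,0)(\lambda^{N-1}-1)r^{N-1}$; sending $r\to\infty$ and then $\lambda\to 1$ pins down $\lim G(w(r),1)/r$. With this filled in your argument is complete, and arguably more transparent about why the specific constant $1/((N-1)G(1,0)F(1,0))$ emerges.
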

 \begin{proof}  
    We define for $\alpha>0$,  the function   \begin{equation}\label{f} f(\alpha):= \frac{1}{G(\alpha,1)F_t(\alpha, 1)}\qquad f(\cdot):(0, +\infty)\to (0, +\infty)\end{equation} and observe, recalling \eqref{lemlem1} and  \eqref{ft} (which holds also for $G$),  that it is strictly  decreasing in $(0, +\infty)$, moreover that $\lim_{\alpha\to 0^+}f(\alpha)=+\infty$, since $F_t(0,1)=0$ and  $\lim_{\alpha\to +\infty}f(\alpha)=0$ by positive $1$-homogeneity of $G$ and positive $0$-homogeneity of $F_t$. 
 So the equation $f(\alpha)=\frac{N-1}{r}$ admits a unique  positive solution \begin{equation}\label{a} \alpha(r):=f^{-1}\left(\frac{N-1}{r}\right)\qquad \alpha(\cdot):(0, +\infty)\to (0, +\infty).\end{equation} 
It is easy to check  that  $\alpha(r)$ is strictly increasing, that  $\lim_{r\to 0^+}\alpha(r)= 0$, whereas  $\lim_{r\to +\infty}\alpha(r)= +\infty$. 
By $1$-homogeneity of $G$ and $0$-homogeneity of $F_t$, we get   \[\lim_{\alpha\to +\infty} \alpha f(\alpha) =\lim_{\alpha\to +\infty}\frac{ \alpha}{ \alpha
G(1,1/\alpha)F_t(1, 1/\alpha)}=\frac{1}{G(1,0)F_t(1,0)}.\]  
 Therefore 
 \begin{equation}\label{as}\lim_{r\to +\infty} \frac{\alpha(r)}{r}=\lim_{\alpha\to +\infty} \frac{\alpha f(\alpha)}{N-1}= \frac{1}{(N-1)G(1,0)F_t(1,0)}.\end{equation} Finally,  since  $F(t,s)= tF_t(t,s)+sF_s(t,s)$, we get that $F_t(1,0)= F(1,0)$. 
 
As long as $w(r)>0$, \eqref{w1} can be written as \[w'(r)=  \frac{F_t(w(r), 1)}{F_{tt}(w(r), 1)}\left(f(w(r))-f(\alpha(r))\right).  \]
Note that if  $0<w(r)<\alpha(r)$, (which is equivalent to $f(w)-\frac{N-1}{r}>0$) we get that $w'(r)>0$, whereas  if $w(r)>\alpha(r)$, then $w'(r)<0$. 
 This implies that if $w$ solves the ode in some interval $(\rho, \rho+s)$ for some $\rho>0$ and   $0<w(\rho)<\alpha(\rho)$, then   $0<w(r)\leq \alpha(r)$ for all $r>\rho$, since  in the region $w>\alpha$, we would get  $w'<0$. Then we get a solution $w$  defined for all $r>\rho$, which is positive and increasing.

 We fix $\rho>0$ and consider the system 
\[ \begin{cases} w'(r)=\frac{1}{F_{tt}(w(r),1)} \left(\frac{1}{G(w(r),1) }-\frac{N-1}{r}  F_t(w(r), 1)\right)\\
w(\rho)=  \frac{\alpha(\rho)}{2}. 
\end{cases} \]
Note that, by the previous discussion, the  system admits a unique solution $w_\rho$
which satisfies $0<w_\rho(r)\leq \alpha(r)$, for all $r>\rho$, and then is defined for all $r>\rho$ and is strictly increasing. Moreover, $w_\rho\sim r$ as $r\to +\infty$.

   We define $w(r)=\lim_{\rho\to 0^+} w_\rho(r)$. We get that the limit is locally uniform in $C^1$, by Arzel\'a-Ascoli Theorem, and moreover $w$ is a solution to
    \eqref{sys1}   which is positive, strictly increasing,  and satisfies $w(r)\leq \alpha(r)$. 
    
 Finally observe that $w(r)\to +\infty$ as $r\to+\infty$ and  by \eqref{lemlem1}, we get that
 \[w'(r)\geq \frac{1}{c_1} F_t(w(r),1)F^3(w(r),1)(f(w(r))-f(\alpha(r))).\] So necessarily $f(w(r))-f(\alpha(r))\to 0$ as $r\to +\infty$, since otherwise we would get $w'(r)\geq kw^3(r)$ for $r\geq r_0$ with suitable $r_0, k>0$, in contradiction with the fact that $w(r)$ is defined for all $r>0$. This implies that as $r\to +\infty$, $w(r)-\alpha(r)\to 0$, which gives the  desired asymptotic behavior. 
     \end{proof} 
\begin{theorem} \label{translator} 
Assume   \eqref{cil}.
Then there exists an entire function $u:\R^N\to \R$ whose graph is a
translating solution to \eqref{mcf}. Moreover, $u$ is convex and satisfies 
\[u(x) =  \frac{\xi^0(x)^2}{2(N-1)G(1,0)F(1,0)} +o(\xi^0(x)^2)\qquad \text{ for $|x|\to +\infty$.}\]  
Finally, if \eqref{phiregular} holds, the $u$ is unique up to dilations and translations. 
\end{theorem}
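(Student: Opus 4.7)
I would follow a three-step scheme: first construct a cylindrically symmetric solution under \eqref{phiregular} using Lemma \ref{lemma1}, then remove the regularity assumption by approximation as in Theorem \ref{translator2}, and finally reduce uniqueness to an ODE uniqueness statement.

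\textbf{Construction under \eqref{phiregular}.} Under \eqref{phiregular}, $F^2$ and $G^2$ inherit uniform convexity and $F,G\in C^2(\R^2\setminus\{0\})$, so Lemma \ref{lemma1} yields a positive increasing solution $w$ of \eqref{sys1} on $(0,+\infty)$ with $w(0^+)=0$ and $w(r)/r\to 1/[(N-1)G(1,0)F(1,0)]$. I set $v(r):=\int_0^r w(s)\,ds$ and $u(x):=v(\xi^0(x))$. Since $w>0$ is increasing, $v$ is $C^2$, strictly convex and increasing with $v(0)=v'(0)=0$, and $u$ is entire and convex as the composition of a convex increasing function with the norm $\xi^0$. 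Reading the chain of computations that reduced \eqref{transcil} to \eqref{w1} in reverse, the fact that $w=v'$ solves \eqref{sys1} is exactly \eqref{transcil} for $u$, so the graph of $u$ translates at speed $c=1$. Integrating $w(r)=r/[(N-1)G(1,0)F(1,0)]+o(r)$ and substituting $r=\xi^0(x)$ yields the stated quadratic asymptotic.

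\textbf{General case by approximation.} To drop \eqref{phiregular}, I approximate within the cylindrical class: take smooth uniformly convex norms $\xi^n\to\xi$ on $\R^N$ and $F^n\to F$, $G^n\to G$ on $\R^2$ locally uniformly, and set $\phi^n(x,z):=F^n(\xi^n(x),z)$, $\psi^n(x,z):=G^n(\xi^n(x),z)$, which satisfy \eqref{phiregular}. The previous step produces a convex $u^n$ with $u^n(0)=0$ for each $n$. The bound $w^n(r)\le\alpha^n(r)$ from Lemma \ref{lemma1}, together with the convergence of the asymptotic constants $1/[(N-1)G^n(1,0)F^n(1,0)]$, yields uniform local $L^\infty$ bounds on $u^n$; convexity then upgrades these to local Lipschitz bounds. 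Arzelà–Ascoli extracts a locally uniform convex limit $u$, and the stability of the distributional level set flow under locally uniform convergence of the anisotropy (Theorem \ref{ex}) identifies the graph of $u$ as a translator for the original $\phi,\psi$. The asymptotic is preserved in the limit since the constant depends continuously on $\phi,\psi$.

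\textbf{Uniqueness and main obstacle.} Assume \eqref{phiregular} and let $\tilde u(x)=\tilde v(\xi^0(x))$ be another entire translator of cylindrical form, normalized by $\tilde v(0)=0$ via a vertical translation. Then $\tilde w:=\tilde v'$ solves \eqref{sys1} with $\tilde w(0^+)=0$, is positive and increasing. Using $F_t(0,1)=0$ from \eqref{ft} and $F_{tt}(0,1)>0$ from \eqref{lemlem1}, a Taylor expansion of \eqref{sys1} near $r=0$ forces every admissible solution to satisfy $w(r)=r/[(N-1)G(0,1)F_{tt}(0,1)]+o(r)$, which pins down the initial slope; standard Picard–Lindelöf uniqueness on $(0,+\infty)$ then gives $\tilde w\equiv w$, hence $\tilde u=u$ modulo a vertical translation. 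Dilations $u\mapsto\lambda u(\cdot/\lambda)$ absorb the change of speed $c$. The main obstacle is precisely this singular initial condition at $r=0$: the coefficient $(N-1)/r$ diverges and the condition $w(0^+)=0$ alone is insufficient for classical ODE uniqueness, so matching the leading-order behavior using \eqref{phiregular} together with the two-sided control on $F_{tt}$ from Lemma \ref{lemlem} is the essential point.
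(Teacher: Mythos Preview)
Your construction step and the approximation step are essentially the paper's argument: define $u(x)=\int_0^{\xi^0(x)}w(r)\,dr$ from Lemma~\ref{lemma1}, then approximate $F,G$ by regular norms, use the trapping $0<w^n\le\alpha^n$ to get locally uniform bounds on the convex $u^n$, and pass to the limit by the stability in Theorem~\ref{ex}. One point you gloss over: the bound $w^n\le\alpha^n$ is only useful if you also show $\alpha^n(r)\le C r$ \emph{uniformly in $n$} for large $r$, which the paper obtains from the explicit inequality $f^n(\alpha)\le 4/[\alpha\,G(1,0)F(1,0)]$ via the locally uniform convergence $F^n\to F$, $G^n\to G$; ``convergence of the asymptotic constants'' alone does not control $\alpha^n$ on finite intervals.

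For uniqueness you take a genuinely different route from the paper, and there is a gap as written. Knowing that every admissible solution satisfies $w(r)=ar+o(r)$ does \emph{not} allow you to invoke ``standard Picard--Lindel\"of on $(0,+\infty)$'': Picard--Lindel\"of needs $w(r_0)=\tilde w(r_0)$ at some $r_0>0$, and two solutions sharing only their first-order jet at the singular endpoint may still differ. (Incidentally, the linearization $w'+(N-1)w/r=1/[G(0,1)F_{tt}(0,1)]$ gives leading coefficient $1/[N\,G(0,1)F_{tt}(0,1)]$, not $1/[(N-1)\,\ldots]$.) What actually works is a regular singular point argument: setting $\delta=\tilde w-w$ and using $F_t(w,1)/F_{tt}(w,1)\sim w$ near $0$ one gets $\delta'\approx -(N-1)\delta/r+O(\delta)$, hence $\delta(r)\sim C r^{-(N-1)}$, which is incompatible with $\delta(0^+)=0$ unless $\delta\equiv0$. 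If you fill this in, your ODE-only proof goes through.

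The paper avoids this analysis entirely: it uses Picard--Lindel\"of on $(0,+\infty)$ only to conclude non-crossing, i.e.\ either $w\le w_1$ or $w_1\le w$ on all of $(0,+\infty)$; integrating gives $u\le u_1$, and after a vertical translation so that they touch, the \emph{strong maximum principle for the elliptic PDE} \eqref{transcil} forces $u\equiv u_1$. This PDE route is shorter and sidesteps the singular endpoint, which is exactly the obstacle you flagged.
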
 
\begin{proof} We start considering the case in which  $F^2,G^2$ are uniformly convex and $F,G\in C^2(\R^2\setminus \{0\})$. The general case will be obtained by approximation. 

Let $w$ the function constructed in Lemma \ref{lemma1}, and define  $u(x):=\int_0^{\xi^0(x)} w(r)dr$.  Then, $u$ is a solution to \eqref{transcil}, and moreover $u$ is  symmetric with respect to $\xi^0$, convex, since $w$ is increasing, and has quadratic growth at infinity. 

Now consider $F,G$  generic, and define a sequence of norms $F^n$, $G^n$ such that $(F^n)^2$, $(G^n)^2$ are uniformly convex, $F^n, G^n\in C^2(\mathbb{R}^2\setminus \{0\})$ and finally  $F^n\to F$, $G^n\to G$ locally uniformly.  We associate to every anisotropy $F^n $, with mobility $G^n$,  a solution $u^n$ to \eqref{transcil} as constructed above.

Since $F$ is a norm, we get that $F(t,s)= tF_t(t,s)+sF_s(t,s)$ (where $(F_t(t,s), F_s(t,s))$ is  the subdifferential of $F$ at $(t,s)$). 
Moreover  $F$ is positively $1$-homogeneous, whereas $F_t, F_s$ are positive $0$-homogeneous, so for $\alpha>0$,  \[F_t(\alpha,1)= \frac{F(\alpha,1)- F_s(\alpha,1)}{\alpha}=
F\left(1,\frac{1}{\alpha}\right)-\frac{1}{\alpha}F_s\left(\frac{\alpha}{\sqrt{1+\alpha^2}}, \frac{1}{\sqrt{1+\alpha^2}}\right)\]
and then $F_t(\alpha, 1)=F_t\left(\frac{\alpha}{\sqrt{1+\alpha^2}}, \frac{1}{\sqrt{1+\alpha^2}}\right)\to F(1,0)>0$ as $\alpha\to +\infty$.

Therefore, by locally  uniform convergence of the convex functions $F^n$ to $F$, there exist $\alpha_0$ and $n_0$  such that $F_t^n(\alpha,1)=F_t^n\left(\frac{\alpha}{\sqrt{1+\alpha^2}}, \frac{1}{\sqrt{1+\alpha^2}}\right)>\frac{F(1,0)}{2}>0$ for all $n\geq n_0$ and $\alpha\geq \alpha_0$. Eventually enlarging  $n_0, \alpha_0$ we get that also $ G^n\left(\frac{\alpha}{\sqrt{1+\alpha^2}}, \frac{1}{\sqrt{1+\alpha^2}}\right)>\frac{G(1,0)}{2}>0$ for all $n\geq n_0$ and $\alpha\geq \alpha_0$.
Recalling the definition \eqref{f}, and using the $1$-homogeneity and the locally uniform convergence, this implies that there exists $C>0$ and $\alpha_0, n_0$ such that for all $\alpha\geq \alpha_0$ and $n\geq n_0$  there holds 
\[f^n(\alpha)=\frac{1}{G^n(\alpha,1)F_t^n(\alpha,1)}=\frac{1}{\sqrt{1+\alpha^2}}\frac{1}{G^n\left(\frac{\alpha}{\sqrt{1+\alpha^2}}, \frac{1}{\sqrt{1+\alpha^2}}\right)F_t^n(\alpha,1)}\leq  \frac{4}{\alpha G(1,0) F(1,0)}. \]
This implies, recalling that $f^n$ are strictly decreasing,  
\[(f^n)^{-1}\left(\frac{4}{\alpha G(1,0)F(1,0)}\right)\leq \alpha\qquad \forall \alpha\geq \alpha_0, n\geq n_0.\]
Recalling the definition of $\alpha^n(r)$ in \eqref{a}, there holds
\[\alpha^n(r)=(f^n)^{-1}\left(\frac{N-1}{r}\right)\leq \frac{4}{(N-1)G(1,0)F(1,0)} r\qquad \forall r\geq \frac{(N-1)G(1,0)F(1,0)}{4} \alpha_0,\  n\geq n_0.\]
Therefore, since by Lemma \ref{lemma1} we get that $w^n(r)\leq \alpha^n(r)$, this implies that 
\[0\leq u^n(x)=\int_0^{\xi^0(x)} w^n(r)dx\leq  \int_0^{\xi^0(x)} \frac{4 r}{(N-1)G(1,0)F(1,0)} dr= \frac{2(\xi^0(x))^2}{(N-1)G(1,0)F(1,0)} . \]
Therefore, up to passing to a subsequence, recalling that $u_n$ are convex functions, we get that $u_n\to u$ locally uniformly. Therefore by stability properties with respect to uniform convergence of solutions to \eqref{pdelevel1}, we get that $u(x)+t$ solves \eqref{pdelevel1}, and then $u$ is a solution to \eqref{transcil}. 

Finally, if \eqref{phiregular} holds, we prove uniqueness of the solution $u$ constructed as above. Assume there exists $u_1(x)=\int_0^{\xi^0(x)} w_1(r)dr$, where $w_1(r)$ is another solution to \eqref{sys1} different from $w$. Then by uniqueness of  solution to the ode in \eqref{sys1}, we get that necessarily either $w(r)\leq w_1(r)$ or $w_1(r)\leq w(r)$ for all $r$. Assume that the first inequality is true. This implies that $u(x)\leq  u_1(x)$ are both solutions to \eqref{transcil} and  we may assume, up to adding a constant, that $u(x)\leq u_1(x)$ and $u(x_0)=u_1(x_0)$ for some $x_0$. So, for strong maximum principle, there holds $u_1=u$. 
\end{proof} 
 
\begin{remark}\upshape\label{remc2}
For the crystalline cases in which  $F(t,s)=|t|+|s|$ or $F(t,s)=\max  (|t|,|s|)$,  with natural mobility $F=G$, we may describe explicitly the shape of the translating solutions constructed in Theorem \ref{translator}.

In the first case, $\phi(x,z)=\xi(x)+|z|$, and then, recalling Proposition \ref{polar}, we get that  the Wulff shape is the cylinder $W_{\phi^0}=W_{\xi^0}\times [-1,1]$, where $W_{\xi^0}$ is the Wulff shape associated to the norm $\xi$.  
In this case the system \eqref{sys1} reads:  for some $r_0>0$, to be   appropriately chosen 
\begin{equation}\label{sys2} \begin{cases} \div\left(\frac{x}{\xi^0(x)}\right) = \frac{1}{1+w(\xi^0(x))} & \xi^0(x)>r_0\\ 
w(\xi^0(x))=0& \xi^0(x)\in [0, r_0].\end{cases}\end{equation} 
Recalling that  $\div\left(\frac{x}{\xi^0(x)}\right)=\frac{N-1}{\xi^0(x)}$, this gives that $w(r)=\frac{r}{N-1}-1$ for $r\geq r_0$, 
and then  \[u(x)=\begin{cases} \int_{r_0}^{\xi^0(x)} \left(\frac{r}{(N-1)}-1\right)dr= \frac{\xi^0(x)^2-r_0^2}{2(N-1)}-\xi^0(x)+r_ 
0& \text{ for }\xi^0(x)>r_0\\ 0 & \text{ for }0\leq\xi^0(x)\leq r_0.\end{cases} \] 
Now, the constant  $r_0$ has to be chosen in order to have that  the subgraph of $u$, that is $E=\{(x,z)\ | \ z\leq  u(x)\}$ solves \eqref{trans0} with $c=1$. For $p=(x,z)\in \partial E$ with $z>0$ the fact that \eqref{trans0} is satisfied is a consequence of the construction of the function $u$, using the solution to system \eqref{sys2}. So it is sufficient to choose $r_0$ such that  \eqref{trans0} is verified at  every $p=(x,0)\in \partial E$, so with  $\xi^0(x)\leq r_0$. Recalling that in this case   $\nu(p)=e_{N+1}$ and  also that $\phi=\psi$, \eqref{trans0} reads 
\[H_\phi(p, E)= -1.\]
We denote  $F_0:=\{x\in \R^N\ | \ \xi^0(x)\leq r_0\}$ and we have, by definition of crystalline mean curvature, recalling that the  Wulff shape is $W_{\phi^0}=W_{\xi^0}\times [-1,1]$,
\[H_\phi(p, E)=\frac{1}{|F_0|}\int_{F_0}  \div_\tau(\partial \phi(\nu(p))) dp=-\frac{1}{|F_0|}\int_{\partial F_0} \phi(\nu(p))dH^{N-1}(p)=-\frac{\Per_\phi (F_0)}{|F_0|}.\]
So, the condition on $r_0$ is that $\Per_\phi \{x\in \R^N\ | \ \xi^0(x)\leq r_0\}=|\{x\in \R^N\ | \ \xi^0(x)\leq r_0\}|$.

In the second case  $\phi(x,z)=\max(\xi(x),|z|)$, and then, recalling Proposition \ref{polar}, we get that  the Wulff shape is the double cone $W_{\phi^0}=\cup_{|z|\leq 1} ((1-|z|)W_{\xi^0}\times\{z\})$.   
In this case the system \eqref{sys1} reads:  for some $r_0>0$, to be   appropriately chosen (see below),  
\begin{equation}\label{sys3} \begin{cases} \div\left(\frac{x}{\xi^0(x)}\right) = \frac{1}{w(\xi^0(x))} &   \xi^0(x)>r_0\\ 
w(\xi^0(x))=1 & 0< \xi^0(x)<r_0. \end{cases}\end{equation} 
Arguing as before  we get that $w(r)= \frac{r}{N-1}$, for $r>r_0$, so  necessarily $r_0\geq N-1$ and the function $u$ is given by
\[ u(x)=\begin{cases} \int_{r_0}^{\xi^0(x)}  \frac{r}{N-1} dr+r_0= \frac{\xi^0(x)^2-r_0^2}{2(N-1)}+ r_0
& \text{ for }\xi^0(x)>r_0\\  \xi_0(x) & \text{ for }0\leq\xi^0(x)\leq r_0.\end{cases} \] For $p=(x,z)\in \partial E$ with $z>\xi^0(x)$ the fact that \eqref{trans0} is satisfied is a consequence of the construction of the function $u$, using the solution to system \eqref{sys3}. 
At $p=(x, \xi^0(x))\in \partial E$, there holds that $\nu(p)=\frac{(-\nabla \xi^0(x),1)}{\sqrt{1+|\nabla \xi^0(x)|^2}}$, and $\phi(\nu(p))=
\frac{1}{\sqrt{1+|\nabla \xi^0|^2}} \max (\xi(\nabla\xi^0(x)),1)=\frac{1}{\sqrt{1+|\nabla \xi^0|^2}}$.  So \eqref{trans0} reads
\[H_\phi(p, E)= -1.\] 
Observe that  $  \partial E\cap\{z\leq \xi^0(x)\}$ coincides with half of $r_0\partial W_{\phi^0}(x)$. Recalling that $H_{\phi} (r_0W_{\phi^0})=\frac{N}{r_0}$, we get that $r_0=N$. \end{remark}


  \begin{bibdiv}
\begin{biblist}
 \bib{a}{article}{
AUTHOR = {Andrews, Ben},
     TITLE = {Volume-preserving anisotropic mean curvature flow},
   JOURNAL = {Indiana Univ. Math. J.},
    VOLUME = {50},
      YEAR = {2001},
    NUMBER = {2},
     PAGES = {783--827},
}
		
\bib{bs}{article}{
    AUTHOR = {Barles, Guy}, 
    author={Souganidis, Panagiotis E.},
     TITLE = {A new approach to front propagation problems: theory and
              applications},
   JOURNAL = {Arch. Rational Mech. Anal.},
    VOLUME = {141},
      YEAR = {1998},
    NUMBER = {3},
     PAGES = {237--296},
}

\bib{bccn}{article}{  
    AUTHOR = {Bellettini, Giovanni}, 
    author={Caselles, Vicent},
    author={Chambolle, Antonin},
    author={Novaga, Matteo},
     TITLE = {Crystalline mean curvature flow of
              convex sets},
   JOURNAL = {Arch. Rat. Mech. Anal.},
    VOLUME = {179},
      YEAR = {2006},
    NUMBER = {1},
     PAGES = {109--152},
} 
 
\bib{bp}{article}{
    AUTHOR = {Bellettini, Giovanni}, 
    author={ Paolini, Maurizio},
     TITLE = {Anisotropic motion by mean curvature in the context of
              {F}insler geometry},
   JOURNAL = {Hokkaido Math. J.},
     VOLUME = {25},
      YEAR = {1996},
    NUMBER = {3},
     PAGES = {537--566},
}
 
\bib{ckn}{article}{
author={Cesaroni, Annalisa}, 
author={Kröner, Heiko},
author={Novaga, Matteo},
title={Anisotropic mean curvature flow of Lipschitz graphs and convergence to self-similar solutions},
journal={arxiv preprint https://arxiv.org/abs/2105.06359},
year={2021},
}

\bib{cmnp1}{article}{  
    AUTHOR = {Chambolle, Antonin}, 
    author={Morini, Massimiliano}, 
    author={Novaga, Matteo}, 
    author={Ponsiglione, Marcello},
     TITLE = {Existence and uniqueness for anisotropic and crystalline mean
              curvature flows},
   JOURNAL = {J. Amer. Math. Soc.},
    VOLUME = {32},
      YEAR = {2019},
    NUMBER = {3},
     PAGES = {779--824}, 
     }

\bib{cmnp2}{article}{  
    AUTHOR = {Chambolle, Antonin}, 
    author={Morini, Massimiliano}, 
    author={Novaga, Matteo}, 
    author={Ponsiglione, Marcello},
         TITLE = {Generalized crystalline evolutions as limits of flows with
              smooth anisotropies},
   JOURNAL = {Anal. PDE},
    VOLUME = {12},
      YEAR = {2019},
    NUMBER = {3},
     PAGES = {789--813},
}
    
    \bib{cmp}{article}{  
    AUTHOR = {Chambolle, Antonin}, 
    author={Morini, Massimiliano}, 
    author={Ponsiglione, Marcello},
TITLE = {Existence and uniqueness for a crystalline mean curvature
              flow},
   JOURNAL = {Comm. Pure Appl. Math.},
     VOLUME = {70},
      YEAR = {2017},
    NUMBER = {6},
     PAGES = {1084--1114},	
     }	
		
\bib{clutt}{article}{ 
    AUTHOR = {Clutterbuck, Julie},
    author={ and Schn\"{u}rer, Oliver C.}, 
    author={Schulze, Felix},
     TITLE = {Stability of translating solutions to mean curvature flow},
   JOURNAL = {Calc. Var. Partial Differential Equations},
    VOLUME = {29},
      YEAR = {2007},
    NUMBER = {3},
     PAGES = {281--293},
}
	
\bib{cm}{article}{
   author={Colding, Tobias H. },
   author={Minicozzi, William P. II},
 title={Generic mean curvature flow I: generic singularities},
   journal={Ann. of Math.},
      volume={175},
   date={2012},
   number={2},
   pages={755--833},
  review={\MR{2993752}}, 
       DOI = {10.4007/annals.2012.175.2.7},
}		
 
 \bib{eh}{article}{
author = {Ecker, Klaus},
    author = {Huisken, Gerhard},
     TITLE = {Mean curvature evolution of entire graphs},
   JOURNAL = {Ann. of Math. (2)},
       VOLUME = {130},
      YEAR = {1989},
    NUMBER = {3},
     PAGES = {453--471},
}

 \bib{e}{book}{ 
    AUTHOR = {Ecker, Klaus},
     TITLE = {Regularity theory for mean curvature flow},
    SERIES = {Progress in Nonlinear Differential Equations and their
              Applications},
    VOLUME = {57},
 PUBLISHER = {Birkh\"{a}user Boston, Inc., Boston, MA},
      YEAR = {2004},
     PAGES = {xiv+165},
}
	
 \bib{ggh}{article}{
 AUTHOR = {Giga, Mi-Ho},
    AUTHOR = {Giga, Yoshikazu},
    AUTHOR = {Hontani, Hidekata},
     TITLE = {Self-similar expanding solutions in a sector for a crystalline
              flow},
   JOURNAL = {SIAM J. Math. Anal.},
      VOLUME = {37},
      YEAR = {2005},
    NUMBER = {4},
     PAGES = {1207--1226}, 
}
 
\bib{gigabook}{book}{
    AUTHOR = {Giga, Yoshikazu},
     TITLE = {Surface evolution equations},
    SERIES = {Monographs in Mathematics},
    VOLUME = {99},
      NOTE = {A level set approach},
 PUBLISHER = {Birkh\"{a}user Verlag, Basel},
      YEAR = {2006},
     PAGES = {xii+264},
     }
		
\bib{gp1}{article}{
    AUTHOR = {Giga, Yoshikazu},
    author={Po\v{z}\'{a}r, Norbert},
     TITLE = {Approximation of general facets by regular facets with respect
              to anisotropic total variation energies and its application to
              crystalline mean curvature flow},
   JOURNAL = {Comm. Pure Appl. Math.},
    VOLUME = {71},
      YEAR = {2018},
    NUMBER = {7},
     PAGES = {1461--1491},
}

\bib{gp2}{article}{
    AUTHOR = {Giga, Yoshikazu},
    author={Po\v{z}\'{a}r, Norbert},
TITLE = {A level set crystalline mean curvature flow of surfaces},
   JOURNAL = {Adv. Differential Equations},
    VOLUME = {21},
      YEAR = {2016},
    NUMBER = {7-8},
     PAGES = {631--698},
}

\bib{hof}{article}{   
 AUTHOR = {Hoffman, David}, 
AUTHOR = {Ilmanen, Tom},
 AUTHOR = {Mart\'{\i}n, Francisco}, 
 AUTHOR = {White, Brian},
   TITLE = {{G}raphical translators for mean curvature flow},
   JOURNAL = {Calc. Var. Partial Differential Equations},
     VOLUME = {58},
      YEAR = {2019},
    NUMBER = {4},
     PAGES = {Paper No. 158},
      ISSN = {0944-2669},
}
	
\bib{hu}{article}{
   author={Huisken, Gerhard},
   title={Flow by mean curvature of convex surfaces into spheres},
   journal={J. Differential Geom.},
   volume={20},
   date={1984},
   number={1},
   pages={237--266},
   issn={0022-040X},
}

\bib{lieb}{book}{
    AUTHOR = {Lieberman, Gary M.},
     TITLE = {Second order parabolic differential equations},
 PUBLISHER = {World Scientific Publishing Co., Inc., River Edge, NJ},
      YEAR = {1996},
     PAGES = {xii+439},
      ISBN = {981-02-2883-X},
}

\bib{lunardi}{book}{
    AUTHOR = {Lunardi, Alessandra},
     TITLE = {Analytic semigroups and optimal regularity in parabolic
              problems},
    SERIES = {Modern Birkh\"{a}user Classics},
 PUBLISHER = {Birkh\"{a}user/Springer Basel AG, Basel},
      YEAR = {1995},
     PAGES = {xviii+424},
      ISBN = {978-3-0348-0556-8; 978-3-0348-0557-5},
}

\bib{moser}{article}{ 
    AUTHOR = {Moser, J\"{u}rgen},
     TITLE = {On {H}arnack's theorem for elliptic differential equations},
   JOURNAL = {Comm. Pure Appl. Math.},
    VOLUME = {14},
      YEAR = {1961},
     PAGES = {577--591},
}
\bib{sx}{article}{ 
    AUTHOR = {Spruck, Joel}, 
    author={ Xiao, Ling},
     TITLE = {Complete translating solitons to the mean curvature flow in
              {$\Bbb R^3$} with nonnegative mean curvature},
   JOURNAL = {Amer. J. Math.},
    VOLUME = {142},
      YEAR = {2020},
    NUMBER = {3},
     PAGES = {993--1015},
     }

\bib{tru}{book}{
    AUTHOR = {Trudinger, Neil S.},
     TITLE = {On regularity and existence of viscosity solutions of
              nonlinear second order, elliptic equations},
 BOOKTITLE = {Partial differential equations and the calculus of variations,
              {V}ol. {II}},
    SERIES = {Progr. Nonlinear Differential Equations Appl.},
    VOLUME = {2},
     PAGES = {939--957},
 PUBLISHER = {Birkh\"{a}user Boston, Boston, MA},
      YEAR = {1989},}
   	
\bib{w}{article}{
 AUTHOR = {Wang, Xu-Jia},
     TITLE = {Convex solutions to the mean curvature flow},
   JOURNAL = {Ann. of Math. (2)},
     VOLUME = {173},
      YEAR = {2011},
    NUMBER = {3},
     PAGES = {1185--1239},}
	
\end{biblist}\end{bibdiv}
  \end{document}